\numberwithin{equation}{section}
\newenvironment{proof*}
{\noindent {\sl Proof.}\quad }{\hfill    $\square$}
\renewcommand{\@cite}[2]{[{{\bf #1}\if@tempswa , #2\fi}]}
\renewcommand{\@biblabel}[1]{[{\bf #1}]\hfill}
\newtheorem{thm}{Theorem}[section]
\newtheorem{lm}[thm]{Lemma}
\newtheorem{cl}[thm]{Corollary}
\newtheorem{prop}[thm]{Proposition}
\theoremstyle{remark}
\newtheorem{rmk}[thm]{Remark}
\theoremstyle{definition}
\newcommand {\be}{{\mathfrak b}}
\newcommand {\g}{{\mathfrak g}}
\newcommand {\mf}{{\mathfrak M}}
\newcommand {\n}{{\mathfrak n}}
\newcommand {\es}{{\mathfrak s}}
\newcommand {\te}{{\mathfrak t}}
\newcommand {\ut}{{\mathfrak u}}
\newcommand {\z}{{\mathfrak z}}
\newcommand {\slro}{{\mathfrak {sl}}_{r+1}}
\newcommand {\eus}{\EuScript}
\newcommand {\esi}{\varepsilon}
\newcommand {\ap}{\alpha}
\newcommand {\lb}{\lambda}
\newcommand {\vp}{\varphi}
\newcommand {\cP}{{\mathcal P}}
\newcommand {\N}{{\mathcal N}}
\newcommand {\co}{{\mathcal O}}
\newcommand {\ct}{{\eus Z}}
\newcommand {\BN}{{\mathbb N}}
\newcommand {\BQ}{{\mathbb Q}}
\newcommand {\BZ}{{\mathbb Z}}
\newcommand {\VV}{{\mathbb V}}
\newcommand {\sfr}{{\eus R}}
\newcommand {\cha}{{\mathsf{ch}}}
\newcommand {\eul}{{\mathsf{gec\,}}}
\newcommand {\hot}{{\mathsf{ht}}}
\newcommand {\Lie}{{\mathrm{Lie\,}}}
\newcommand {\tri}{{\mathfrak{sl}}_2}
\newcommand {\GR}[2]{{\textrm{{\color{blue}\bf #1}}}_{#2}}
\newcommand {\beq}{\begin{equation}}
\newcommand {\eeq}{\end{equation}}
\newcommand{\curge}{\succcurlyeq}
\newcommand{\curle}{\preccurlyeq}
\renewcommand{\le}{\leqslant}
\renewcommand{\ge}{\geqslant}
\newcommand {\bbk}{\Bbbk}
\newcommand{\vts}{\VV_{\theta_s}}
\newcommand{\vml}{\VV_\lb^\mu}
\newcommand{\mul}{m_\lb^\mu}
\newcommand{\mullq}{{\mf}_\lb^\mu(q)}
\newcommand{\multq}{{\mf}_\theta^\mu(q)}
\newcommand{\mulsq}{{\mf}_{\theta_s}^\mu(q)}
\newcommand{\PT}{{\mathfrak X}_+}
\newcommand{\XT}{\mathfrak X}
\begin{document}
\setlength{\parskip}{2pt plus 4pt minus 0pt}
\hfill {\scriptsize June 5, 2014} 
\vskip1ex

\title[On $q$-analogues of all weight multiplicities]{On Lusztig's $q$-analogues of all weight multiplicities of a representation}
\author[D.\,Panyushev]{Dmitri I. Panyushev}
\address{Institute for Information Transmission Problems of the R.A.S., Bol'shoi Karetnyi per. 19, 
127994 Moscow, Russia 
\hfil\break\indent
Independent University of Moscow, 
Bol'shoi Vlasevskii per. 11, 119002 Moscow, Russia
}
\email{panyushev@iitp.ru}
\thanks{Part of this work was done while I was visiting the Friedrich-Schiller-Universit\"at (Jena) in Spring 2013}
\subjclass[2010]{17B10, 17B20, 20G10}
\maketitle

\section*{Introduction}
The ground field $\bbk$ is algebraically closed and of characteristic zero.
Let $G$ be a connected semisimple algebraic group, and  $T$ a maximal torus inside a Borel subgroup $B$. Write $\g,\te$, and $\be$ for their Lie algebras. If $\VV$ is a finite-dimensional rational
$G$-module, then $\VV=\oplus_{\mu\in\te^*} \VV^\mu$ is the weight decomposition with respect to
$T$ (or $\te$). If $\VV=\VV_\lb$ is a simple $G$-module with highest weight $\lb$, then $\mul=\dim(\vml)$. 
In this article, we present some results on Lusztig's $q$-analogues $\mullq$ of weight 
multiplicities $\mul$. 
The polynomial $\mullq$ is defined algebraically as an alternating sum over the Weyl group, through
the $q$-analogue of Kostant's partition function. Initially, Lusztig introduced $q$-analogues only for
dominant weights $\mu$~\cite[(9.4)]{Lus}. However, this constraint is unnecessary and $\mullq$ is a
non-trivial polynomial for any $\mu$ such that $\lb-\mu$ is a linear combination of positive roots with 
nonnegative coefficients; in particular, for all weights of $\VV_\lb$. 
A relationship with certain Kazhdan-Lusztig polynomials \cite{kato} implies that $\mullq$ has nonnegative coefficients whenever $\mu$ is dominant. For instance,
if $\VV_\lb$ has the zero weight, with $m_\lb^0=n$, then
$\mf_\lb^0(q)=\sum_{i=1}^n q^{m_i(\lb)}$ and $m_1(\lb),\dots,m_n(\lb)$ are the 
{\it generalised exponents\/} of $\VV_\lb$. These numbers were first considered by 
Kostant~\cite[n.\,5]{ko63} 
in connection with the graded $G$-module structure of the ring $\bbk[\N]$, where $\N\subset\g$ is the 
nilpotent cone. The interpretation of Kostant's generalised exponents via polynomials $\mf_\lb^0(q)$ 
is due to W.\,Hesselink \cite{he80} and  D.\,Peterson (unpublished).

In Section~\ref{sect:recoll-q-an}, we gather basic properties of polynomials $\mullq$ and recall their 
relationship to cohomology of line bundles on $G\times_B\ut$. We emphasise the 
role of results of Broer on the non-negativity of coefficients of $\mullq$ \cite{br93} and the induction 
lemma for computing $\mullq$ \cite{br94}. Using Broer's results allows us to quickly recover  some 
known results on coefficients of degenerate Cherednik kernel that appear in work of Bazlov, Ion, and 
Viswanath \cite{ba01,ion04,visw}. We also prove that $\mf_\lb^\mu(q+1)$ is a polynomial in $q$ with 
nonnegative coefficients.

In Section~\ref{sect:adj+little}, $\g$ is assumed to be simple, and then $\theta$ is the highest root. 
We determine Lusztig's $q$-analogues for all roots of $\g=\VV_\theta$. Furthermore,
if $\g$ has two root lengths, then the short dominant root
$\theta_s$ determines a representation that is called {\it little adjoint}, and we also compute 
$q$-analogues for all  weights of $\vts$. Then we obtain a formula for 
the weighted sum $\sum_{\mu} m_\theta^\mu\multq$, which implies that it depends only on 
$\mf_\theta^0(q)$ and the Coxeter number of $\g$. A similar result is valid  for $\vts$.

In Section~\ref{sect:summa}, we prove that, for any simple $G$-modules $\VV_\lb$ and 
$\VV_\gamma$, the sum $\sum_{\mu} m_\gamma^\mu\mullq$ is equal to the $q$-analogue of
the zero weight multiplicity for the (reducible) $G$-module $\VV_\lb\otimes\VV_\gamma^*$~(Theorem~\ref{thm:main}).
Therefore, $\sum_{\mu} m_\gamma^\mu\mullq=\sum_{\mu} m_\lb^\mu\mf_\gamma^\mu(q)$ and
this also provides another formula for the $\BZ[q]$-valued symmetric bilinear form on the character 
ring of $\g$ that was introduced by R.\,Gupta (Brylinski) in \cite{rkg87-A}. 
As a by-product, we obtain that such a weighted sum is always a polynomial with non-negative
coefficients. Comparing two formulae for $\sum_{\mu} m_{\theta_s}^\mu\mf_{\theta_s}^\mu(q)$
yields a curious identity involving the Poincar\'e polynomial for $W_{\theta_s}$, the Weyl group
stabiliser of $\theta_s$, and $\mf_{\theta_s}^0(q)$ (Corollary~\ref{cor:tozhdestvo-theta}).
We hope that there ought to be other interesting results pertaining to $q$-analogues 
of all weights of a representation.

If $\g$ is simple and $\eta_i$ is the number of positive roots of height $i$, then the partition formed by 
the exponents of $\g$ is dual (conjugate) to the partition formed by the $\eta_i$'s, 
see~\cite{ko59,ion04,visw}. 
Section~\ref{sect:jump-and-general} contains a geometric explanation and generalisation to this result.
Let $e\in\g$ be a principal nilpotent element. We prove that if $\dim\VV_\lb^e=\dim\VV_\lb^\te$, then
the `positive' weights of $\VV_\lb$ exhibit the similar phenomenon relative to the generalised 
exponents of $\VV_\lb$.

\subsection*{Main notation}
Throughout, $G$ is a connected  semisimple algebraic group with $\Lie G=\g$.
We fix a Borel subgroup $B$ and a maximal torus $T\subset B$, and consider the corresponding
triangular decomposition $\g=\ut\oplus\te\oplus\ut^-$, where $\Lie B=\ut\oplus\te$. Then

{\bf --} \ $\Delta$ is the {\it root system\/} of $(\g,\te)$, $\Delta^+$  is the set of positive roots 
corresponding to $\ut$,  $\Pi=\{\ap_1,\ldots,\ap_r\}$ is the set of simple roots in $\Delta^+$, 
and $\rho=\frac{1}{2}\sum_{\mu\in\Delta^+}\mu$;

{\bf --} \ $\XT$ is the lattice of integral weights of $T$ and 
${\te}^*_{\BQ}$ is the $\BQ$-vector subspace of $\te^*$ generated by $\XT$,
$Q=\oplus _{i=1}^r {\Bbb Z}\ap_i \subset \XT$ is the {\it root lattice}, and
$Q_+$ is the {monoid\/} generated by $\ap_1,\dots,\ap_r$.
If $\gamma=\sum_{i=1}^r c_i\ap_i\in Q_+$, then $\hot(\gamma)=\sum_{i=1}^r c_i$ is the {\it height\/} of $\gamma$.

{\bf --} \  $\PT$ is the monoid of dominant weights and $\vp_i\in\PT$ is the fundamental weight corresponding to $\ap_i\in\Pi$;

{\bf --} \ $W$ is the {\it Weyl group\/} of $(\g,\te)$ and $(\ ,\ )$ is a $W$-invariant positive-definite 
inner product on ${\te}^*_{\BQ}$. 
As usual, $\mu^\vee={2\mu}/{(\mu,\mu)}$ is the coroot for $\mu\in \Delta$.

{\bf --} \ If $\lb\in\PT$, then $\VV_\lb$ is the simple
$G$-module with highest weight $\lb$, $\VV_\lb^*$ is its dual, and $\lb^*\in\PT$ is defined by
$\VV_{\lb^*}=\VV_\lb^*$.

\noindent
For $\ap\in\Pi$, we let $s_\ap$ denote the corresponding simple reflection in $W$. 
If $\ap=\ap_i$, then we also write $s_i=s_{\ap_i}$. 
The {\it length function\/} on $ W$ with respect to  $s_1,\dots,s_r$ is  denoted by $\ell$. 

\section{Generalities on $q$-analogues of weight multiplicities} 
\label{sect:recoll-q-an}

\noindent
If $\lb\in\PT$, then $\vml$ is the $\mu$-weight space of $\VV_\lb$, $m_\lb^\mu=\dim\vml$, and
$\chi_\lb=\cha(\VV_\lb)=\sum_\mu m_\lb^\mu e^\mu \in \BZ[\mathfrak X]$ is the {\it character\/} of $\VV_\lb$. Let $\esi(w)=(-1)^{\ell(w)}$ be the sign of $w\in W$.
By Weyl's character formula, 
$\cha(\VV_\lb)=\displaystyle
\frac{\sum_{w\in W}\esi(w)e^{w(\lb+\rho)}}{e^\rho\prod_{\gamma\in\Delta^+}(1-e^{-\gamma}) }$. \ 
For $\mu,\gamma\in\XT$, we write $\mu\curle\gamma$, if 
$\gamma-\mu\in Q_+$. 

Define functions ${\mathcal P}_q(\mu)$ by the equation
\[ \frac{1}{\prod_{\ap\in \Delta_+}(1-qe^\ap )}=:\sum_{\mu\in Q_+}
{\mathcal P}_q(\mu)e^\mu\ . \]
Then $\mathcal P_q(\mu)$ is a polynomial in $q$ with $\deg\mathcal P_q(\mu)=\hot(\mu)$ and 
$\mu \mapsto {\mathcal P}(\mu):={\mathcal P}_q(\mu)\vert_{q=1}$ is the usual Kostant's partition
function. For $\lb,\mu\in\PT$,
Lusztig \cite[(9.4)]{Lus} (see also \cite[(1.2)]{kato}) introduced a fundamental $q$-analogue of weight multipliciities $\mul$:
\beq    \label{eq:def-q-an}
   \mullq=\sum_{w\in W}\esi(w){\mathcal P}_q(w(\lb+\rho)-(\mu+\rho)) . 
\eeq
For series $\GR{A}{r}$, these are the classical {\it Kostka-Foulkes polynomials}. Therefore, this name
is sometimes used in the general situation. 
It is also known that $\mullq$ are related to certain Kazhdan-Lusztig polynomials associated with the
corresponding affine Weyl group \cite{Lus}, \cite[Theorem\,1.8]{kato}.
However, one needn't restrict oneself with only dominant weights $\mu$, and  the polynomials 
$\mullq$ can be considered for arbitrary $\mu\in\XT$. 
It is easily seen that
\begin{itemize}
\item  $\mullq\equiv 0$ unless $\lb\curge\mu$;
\item if $\lb\curge\mu$, then $\mullq$ is a monic polynomial and $\deg\mullq=\hot(\lb-\mu)$; therefore,
$\mf_\lb^\lb(q)\equiv 1$;
\item  $\mf_\lb^\mu(1)=m_\lb^\mu$.
\end{itemize}
\noindent
In particular, if $\mu\curle\lb$, but $\mu$ is not a weight of $\VV_\lb$, then $\mf_\lb^\mu(1)=0$ and
therefore $\mullq$ has negative coefficients. If $\mu$ is dominant, then the relationship with 
Kazhdan-Lusztig polynomials implies that $\mullq$ has nonnegative coefficients. The most general 
result on non-negativity of the coefficients of $\mullq$, whose proof exploits the cohomological 
interpretation, is due to Broer~\cite{br93}, see Theorem~\ref{broer-krit} below.

\subsection{A relationship to cohomology of line bundles} \leavevmode\par
\noindent
Let $\ct$ be the cotangent bundle of $G/B$, i.e., $\ct=G\times_B\ut$. Recall that the corresponding 
collapsing $\ct\to G\ut=:\N\subset \g$ is birational and 
$H^0(\ct,\co_\ct)=\bbk[\N]$ \cite{he76}. Here $\N$ is the cone of nilpotent elements of $\g$.
For $\mu\in\XT$, let $\bbk_\mu$ denote the corresponding 
one-dimensional $B$-module. We consider line bundles on $\ct$ induced from homogeneous line 
bundles on $G/B$, i.e., line bundles of the form
\[
    G\times_B(\ut\oplus\bbk_\mu) \to G\times_B\ut=\ct .
\] 
The (invertible) sheaf of section of this bundle is denoted by $\mathcal L_\ct(\bbk_\mu)$. More generally, 
if $N$ is a rational $B$-module, then 
\[
    G\times_B(\ut\oplus N) \to G\times_B\ut=\ct 
\]
is a vector bundle on $\ct$ of rank $\dim N$ and the corresponding sheaf of sections (locally free 
$\co_\ct$-module) is  $\mathcal L_\ct(N)$. If $\mathcal E$ is a locally free $\co_\ct$-module,
then $\mathcal E^\star$ is its dual. For instance, 
$\mathcal L_\ct(N)^\star=\mathcal L_\ct(N^*)$, where $N^*$ is the dual $B$-module.

The cohomology groups of $\mathcal L_\ct(N)$  
have a natural structure of a graded $G$-module by
\[
    H^i(G\times_B \ut, \mathcal L_{G\times_B \ut}(N))\simeq \bigoplus_{j=0}^\infty
    H^i(G/B, \mathcal L_{G/B}(\mathcal S^j \ut^*\otimes N)) ,
\]
where $\mathcal S^j \ut^*$ is the $j$-th symmetric power of the dual of $\ut$. 
Set $H^i(\mu):=H^i(\ct, \mathcal L_{\ct}(\mu)^\star)$.
It is a graded $G$-module with
\[
   (H^i(\mu))_j=H^i(G/B, \mathcal L_{G/B}(\mathcal S^j \ut\otimes \bbk_\mu)^\star).
\]
As $\dim (H^i(\mu))_j < \infty$, the graded character 
of $H^i(\mu)$ is well-defined:
\[
     \cha_q( H^i(\mu))=\sum_j \sum_{\lb\in\PT} \dim \mathsf{Hom}_G\bigl(\VV_\lb, (H^i(\mu))_j\bigr)
     \chi_\lb q^j \in \BZ [\mathfrak X] [[q]] .
\]
The reader is referred to work of Broer and Brylinski for more details \cite{br93, br94, rkb89}.

\begin{thm}[{\cite[Lemma\,6.1]{rkb89}}]   \label{thm:ranee-svyaz}
For any $\mu\in\XT$, we have 
\[
   \displaystyle\sum_{i}  (-1)^i \cha_q ( H^i(\mu))=
   \sum_{\lb\in\PT}\mf_{\lb}^\mu(q) \chi_{\lb}^* .
\]
\end{thm}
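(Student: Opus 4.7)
My plan is to compute the Euler characteristic on the left by reducing cohomology on $\ct$ to cohomology on $G/B$, applying Borel--Weil--Bott, and recognising the resulting $W$-alternating sum as Lusztig's formula \eqref{eq:def-q-an}. Since the projection $\pi\colon \ct = G\times_B\ut \to G/B$ is an affine morphism with linear fibre $\ut$, its higher direct images vanish and the Leray spectral sequence degenerates, giving
\[
H^i(\ct,\mathcal L_\ct(\bbk_\mu)^\star) \;=\; \bigoplus_{j\ge 0} H^i\bigl(G/B,\,\mathcal L_{G/B}(\mathcal S^j\ut^*\otimes\bbk_{-\mu})\bigr),
\]
precisely the grading recalled just before the theorem.

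Next, I would use that $\chi^G(V) := \sum_i(-1)^i\cha\,H^i(G/B,\mathcal L_{G/B}(V))$ is additive on short exact sequences of rational $B$-modules and so factors through $\cha(V)\in\BZ[\mathfrak X]$. Filtering each $\mathcal S^j\ut^*\otimes\bbk_{-\mu}$ by $T$-weights reduces the computation to $1$-dimensional $B$-modules $\bbk_\nu$, and combining with the generating-function identity
\[
\sum_{j\ge 0}q^j\cha(\mathcal S^j\ut^*\otimes\bbk_{-\mu}) \;=\; \frac{e^{-\mu}}{\prod_{\alpha\in\Delta^+}(1-qe^{-\alpha})} \;=\; \sum_{\nu\in\mathfrak X}\mathcal P_q(-\mu-\nu)\,e^\nu
\]
one obtains
\[
\sum_i(-1)^i\cha_q H^i(\mu) \;=\; \sum_{\nu\in\mathfrak X}\mathcal P_q(-\mu-\nu)\,\chi^G(\bbk_\nu).
\]

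Borel--Weil--Bott then evaluates $\chi^G(\bbk_\nu)$: in the convention for which $H^0(G/B,\mathcal L_{G/B}(\bbk_{-\nu}))\cong \VV_\nu^*$ for dominant $\nu$, one has $\chi^G(\bbk_\nu) = \esi(w)\chi_\lb^*$ whenever $w\in W$ is the unique element taking $-\nu+\rho$ into the dominant chamber $\rho+\PT$, with $\lb = w(-\nu+\rho)-\rho$, and $\chi^G(\bbk_\nu)=0$ if $-\nu+\rho$ lies on a wall. Collecting contributions to $\chi_\lb^*$ for a fixed $\lb\in\PT$: each $w\in W$ singles out the unique $\nu = \rho - w^{-1}(\lb+\rho)$, producing the contribution $\esi(w)\mathcal P_q\bigl(w^{-1}(\lb+\rho)-(\mu+\rho)\bigr)$. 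Summing and reindexing $w\leftrightarrow w^{-1}$ yields the coefficient of $\chi_\lb^*$ on the left-hand side as
\[
\sum_{w\in W}\esi(w)\,\mathcal P_q\bigl(w(\lb+\rho)-(\mu+\rho)\bigr) \;=\; \mf_\lb^\mu(q),
\]
exactly Lusztig's definition \eqref{eq:def-q-an}. Summing over $\lb\in\PT$ finishes the identification.

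The main delicacy is the Borel--Weil--Bott step: the \emph{dual} character $\chi_\lb^*$ (and not $\chi_\lb$) appears because of the ``$\star$'' in the definition of $H^i(\mu)$ combined with the Borel--Weil realisation of $H^0$ as the dual of the highest-weight module; any other convention merely displaces the dual around the identity. Once the sign/duality bookkeeping is carried out correctly, the remainder of the argument is a direct rearrangement of Kostant's $q$-partition function values into Lusztig's alternating sum.
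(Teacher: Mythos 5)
The paper does not reprove this statement --- it is quoted directly from Brylinski \cite[Lemma\,6.1]{rkb89} (with a more general version cited from \cite[Theorem\,3.8]{selecta}) --- so there is no in-text proof to compare against. Your argument is correct and is the standard one for this identity: since $\pi\colon\ct\to G/B$ is affine (a vector bundle), $R^{>0}\pi_*=0$ and $\pi_*\mathcal L_\ct(\bbk_\mu)^\star=\mathcal L_{G/B}(\operatorname{Sym}\ut^*\otimes\bbk_{-\mu})$, giving the displayed grading; the Euler characteristic $\chi^G$ factors through the Grothendieck group of finite-dimensional $B$-modules, so filtering by $T$-weights reduces to line bundles; the generating-function identity for $\operatorname{Sym}\ut^*$ produces $\sum_\nu\mathcal P_q(-\mu-\nu)\chi^G(\bbk_\nu)$; and Borel--Weil--Bott together with the reindexing $w\leftrightarrow w^{-1}$ recovers exactly the alternating sum \eqref{eq:def-q-an}, with the dual $\chi_\lambda^*$ appearing because of the $\star$ in the definition of $H^i(\mu)$. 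This is, in essence, Brylinski's own derivation, so your proposal matches the intended argument.

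One small remark: you should note that the sum over $\nu$ is a priori infinite, but for each fixed $\lambda$ the collection of $\nu$ with $\chi^G(\bbk_\nu)$ contributing a multiple of $\chi_\lambda^*$ is indexed by $W$ (namely $\nu=\rho-w^{-1}(\lambda+\rho)$), so the coefficient of each $\chi_\lambda^*$ is a genuine finite alternating sum; and convergence in $\BZ[\XT][[q]]$ follows since $\dim(H^i(\mu))_j<\infty$ for each $j$, as the paper records. This is routine bookkeeping, but worth making explicit so the rearrangement of the double sum is justified.
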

\noindent A more general version of this relation, where $\n\subset\g$ is replaced with a $B$-stable subspace of an arbitrary $G$-module $\VV_\lb$, appears in \cite[Theorem\,3.8]{selecta}.

For $\mu=0$, we have $\mathcal L_{\ct}(0)=\co_\ct$ and $H^i(\ct,\co_{\ct})=0$ for $i>0$ \cite{he76}.
Therefore, the sum $\sum_{\lb\in\PT}\mf_{\lb}^0(q) \chi_{\lb}^*$
represents the graded character of $H^0(\ct,\co_{\ct})\simeq \bbk[\N]$~\cite{he80}.

For $\mu\in\XT$, we write $\mu^+$ for the unique 
element in $W\mu\cap\PT$.

\begin{thm}[Broer's criterion \cite{br93,br97}]    \label{broer-krit}
The following conditions are equivalent for $\mu\in\XT$:
\begin{itemize}
\item[\sf (1)] \ $\mullq$ has nonnegative coefficients for all $\lb\in\PT$; 
\item[\sf (2)] \ if $\mu\curle \gamma\curle \mu^+$ and $\gamma\in\PT$, then $\gamma=\mu^+$;
\item[\sf (3)] \ $(\mu,\nu^\vee)\ge -1$ for all $\nu\in\Delta^+$.
\end{itemize}
\end{thm}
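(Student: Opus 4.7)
The plan is to prove the cycle $(1)\Rightarrow(2)\Rightarrow(3)\Rightarrow(1)$. The first two implications are elementary/combinatorial; the third is the substantive direction and relies on the cohomological interpretation from Theorem~\ref{thm:ranee-svyaz}.

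For $(1)\Rightarrow(2)$ I argue by contrapositive. If $\gamma\in\PT$ satisfies $\mu\prec\gamma\prec\mu^+$, then $\mu^+\not\curle\gamma$ (both are dominant), so $\mu$ is not a weight of $\VV_\gamma$ while still $\mu\curle\gamma$. Hence $\mf_\gamma^\mu(q)$ is a nonzero monic polynomial of positive degree $\hot(\gamma-\mu)$ with $\mf_\gamma^\mu(1)=m_\gamma^\mu=0$, so it must have a strictly negative coefficient. This is exactly the observation recorded immediately before the theorem.

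For $(2)\Rightarrow(3)$, again by contrapositive, suppose $\nu\in\Delta^+$ satisfies $c:=-(\mu,\nu^\vee)\ge 2$. I propose $\gamma:=(\mu+\nu)^+$, and verify three things. First, $\mu\prec\mu+\nu\curle\gamma$ is immediate since $\nu\neq 0$. Second, $\gamma\curle\mu^+$: the weight $\mu\in W\mu^+$ is a weight of $\VV_{\mu^+}$ of $\tri$-weight $(\mu,\nu^\vee)=-c$ with respect to the root-$\tri$-triple attached to $\nu$, so by standard $\tri$-representation theory the entire $\nu$-string $\mu,\mu+\nu,\dots,\mu+c\nu=s_\nu\mu$ consists of weights of $\VV_{\mu^+}$; in particular $(\mu+\nu)^+\curle\mu^+$. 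Third, $\gamma\ne\mu^+$: the norm computation
\[
\|\mu+\nu\|^2-\|\mu\|^2=(\nu,\nu)\bigl((\mu,\nu^\vee)+1\bigr)<0
\]
shows $\mu+\nu\notin W\mu^+$, hence $(\mu+\nu)^+\ne\mu^+$. Thus $\gamma$ is a dominant weight strictly between $\mu$ and $\mu^+$, contradicting (2).

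The substantive direction is $(3)\Rightarrow(1)$. By Theorem~\ref{thm:ranee-svyaz},
\[
\sum_i(-1)^i\cha_q H^i(\mu)\;=\;\sum_{\lb\in\PT}\mf_\lb^\mu(q)\,\chi_\lb^*,
\]
so it suffices to establish the vanishing $H^i(\mu)=0$ for every $i>0$ under assumption~(3). Granted that vanishing, the alternating sum collapses to the honest graded character $\cha_q H^0(\mu)$ of a graded $G$-module, and
\[
\mf_\lb^\mu(q)=\sum_j\dim\mathsf{Hom}_G\bigl(\VV_\lb,H^0(\mu)_j\bigr)\,q^j
\]
manifestly has non-negative coefficients. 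The vanishing itself is proved by pushing $\mathcal L_\ct(\mu)^\star$ along the affine projection $\ct\to G/B$ and reducing to Bott-type statements for $H^i\bigl(G/B,\mathcal L_{G/B}(\mathcal S^j\ut\otimes\bbk_\mu)^\star\bigr)$ in each graded degree. The main technical obstacle --- and the precise spot where (3) is needed --- is that $\mathcal S^j\ut$ is only \emph{filtered}, not split, as a $B$-module by one-dimensional pieces $\bbk_{\nu_1+\dots+\nu_j}$ with $\nu_i\in\Delta^+$, and one must ensure that none of the twisted weights $\mu-(\nu_1+\dots+\nu_j)-\rho$ lands in a singular Bott chamber producing unwanted higher cohomology. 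Broer controls this uniformly across $j$ by an induction on parabolic subgroups, ultimately reducing to minimal $A_1$-parabolics where the bound $(\mu,\alpha^\vee)\ge-1$ for the relevant root is exactly the required input.
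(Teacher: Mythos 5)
Your argument is correct, and it is worth noting that the paper itself offers no proof for this theorem: it simply cites Broer, attributing $(1)\Leftrightarrow(2)$ to \cite[Theorem\,2.4]{br93} and $(2)\Leftrightarrow(3)$ to \cite[Prop.\,2(iii)]{br97}, with the remark that the substance of $(2)\Rightarrow(1)$ is higher-cohomology vanishing for $\mathcal L_\ct(\mu)^\star$. Your cycle $(1)\Rightarrow(2)\Rightarrow(3)\Rightarrow(1)$ is an efficient reorganisation: it isolates the cohomological input in a single implication and supplies short direct proofs for the other two. The $(1)\Rightarrow(2)$ step is exactly the observation the paper records just before the theorem (a monic polynomial of positive degree that vanishes at $q=1$ must have a negative coefficient), correctly combined with the fact that a dominant $\gamma$ strictly between $\mu$ and $\mu^+$ cannot have $\mu$ among its weights. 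The $(2)\Rightarrow(3)$ step is a clean contrapositive via $\gamma=(\mu+\nu)^+$: the $\nu$-string argument gives $\gamma\curle\mu^+$, the ordering $\mu\prec\mu+\nu\curle\gamma$ gives $\mu\curle\gamma$, and the norm inequality $\|\mu+\nu\|^2-\|\mu\|^2=(\nu,\nu)\bigl((\mu,\nu^\vee)+1\bigr)<0$ excludes $\gamma=\mu^+$; each of these checks is right. For $(3)\Rightarrow(1)$ you quite properly delegate the vanishing $H^i(\mu)=0$, $i>0$, to Broer's induction over minimal parabolics, and your description of where the bound $(\mu,\alpha^\vee)\ge-1$ enters is faithful to \cite{br93}. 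So the approach is the same in its essential reliance on Broer's vanishing theorem, but your write-up replaces one of the two citations by elementary arguments and makes the logical dependence transparent.
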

\noindent
The equivalence of (1) and (2) is proved in \cite[Theorem\,2.4]{br93}; the underlying reason is that,
for such $\mu$, higher cohomology of $\mathcal L_{\ct}(\mu)^\star$ vanishes. 
The equivalence of (2) and (3) 
appears in \cite[Prop.\,2(iii)]{br97}.
\begin{rmk}
The required equivalence of (2) and (3) is correctly proved by Broer, but some other 
assertions of Proposition~2 in \cite{br97} are false. Namely, in part (iii) Broer claims the equivalence of certain conditions (a),(b), and (c), where (a) and (b) are just our conditions (2) and (3). But condition (c) must be excluded from that list. Moreover, part (ii) in \cite[Prop.\,2]{br97} is also false.
A common counterexample is given e.g. by $\beta=-\vp_1$ for $\g=\slro$, $r\ge 2$. This $\beta$ 
satisfies Broer's  conditions (a) and (b), but not (c); and part (ii) also fails for $\beta$. More generally,
if $\kappa\in\PT$ is minuscule, then $\beta=-\kappa$ provides a counterexample to Broer's 
assertions.
\end{rmk}
Recall that $m_\lb^0\ne 0$ if and only if $\lb\in\PT\cap Q$. Then 
$\mf_\lb^0(q)=\sum_{j=1}^n q^{m_j(\lb)}$ ($n=m_\lb^0$) is a polynomial with nonnegative coefficients 
and the integers $m_1(\lb),\dots,m_n(\lb)$ are called the {\it generalised exponents\/} of $\VV_\lb$.
If $\g$ is simple and $\VV_\lb=\g$, then they coincide with the usual exponents of $\g$ (= of $W$)~\cite{he80}.

\subsection{Broer's induction lemma and degenerate Cherednik kernel}

The following fundamental result of Broer is a powerful tool for computing $q$-analogues of weight 
multiplicities. Unfortunately, it did not attract  the attention it deserves. Perhaps the reason is that
Broer formulates it as a relation in ``the Grothendieck group of finitely generated graded $\bbk[\N]$-modules with a compatible $G$-module structure''.
However, extracting the coefficients of $\chi_\lb^*$, one obtains the following down-to-earth description:

\begin{thm}[Induction Lemma, cf.~{\cite[Prop.\,3.15]{br94}}]  \label{induction-lemma}
Let $\lb\in \PT$. If\/ $\gamma\in\XT$ and $(\gamma,\ap^\vee)=-n<0$ for some $\ap\in\Pi$
(hence $s_\ap(\gamma)=\gamma+n\ap$),  then
\beq   \label{eq:ind-lemma}
    \mf_\lb^{\gamma}(q)+\mf_\lb^{s_\ap(\gamma)-\ap}(q)=q(\mf_\lb^{\gamma+\ap}(q)+\mf_\lb^{s_\ap(\gamma)}(q)) .
\eeq
\end{thm}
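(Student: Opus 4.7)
The plan is to rewrite the claim \eqref{eq:ind-lemma} as the single relation
\[
   G_\gamma(q) + G_{s_\ap(\gamma)-\ap}(q) \;=\; 0, \qquad \text{where } G_\mu(q) := \mf_\lb^\mu(q) - q\,\mf_\lb^{\mu+\ap}(q),
\]
and then derive this by exhibiting $G_\mu(q)$ as the coefficient of $e^\mu$ in a formal series that transforms under the simple reflection $s_\ap$ in a very controlled way.

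The first step is to assemble the generating series $H(\lb) := \sum_{\mu\in\XT}\mf_\lb^\mu(q)\,e^\mu$. From the definition~\eqref{eq:def-q-an} and the identity $\sum_{\xi\in Q_+}\mathcal P_q(\xi)\,e^{-\xi} = \prod_{\nu\in\Delta^+}(1-qe^{-\nu})^{-1}$ (obtained by the substitution $e^\nu\mapsto e^{-\nu}$ in the definition of $\mathcal P_q$), an interchange of summations gives
\[
   H(\lb) \;=\; \frac{e^{-\rho}\sum_{w\in W}\esi(w)\,e^{w(\lb+\rho)}}{\prod_{\nu\in\Delta^+}(1-qe^{-\nu})}.
\]
Multiplying by the single factor $(1-qe^{-\ap})$ absorbs one term from the denominator; on the other hand, expanding $(1-qe^{-\ap})\sum_\mu\mf_\lb^\mu(q)\,e^\mu$ and shifting the index $\mu\leftarrow\mu+\ap$ in the second sum shows that this product equals $\sum_\mu G_\mu(q)\,e^\mu$. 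Thus
\[
   \tilde H(\lb) \;:=\; \sum_{\mu\in\XT} G_\mu(q)\,e^\mu \;=\; \frac{e^{-\rho}\sum_{w\in W}\esi(w)\,e^{w(\lb+\rho)}}{\prod_{\nu\in\Delta^+\setminus\{\ap\}}(1-qe^{-\nu})}.
\]

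Next, apply $s_\ap$ to $\tilde H(\lb)$. The denominator is $s_\ap$-invariant, because $s_\ap$ permutes $\Delta^+\setminus\{\ap\}$. The numerator transforms by the scalar $-e^\ap$: indeed $s_\ap(e^{-\rho}) = e^\ap e^{-\rho}$ since $s_\ap\rho=\rho-\ap$, and the alternating sum $\sum_w\esi(w)\,e^{w(\lb+\rho)}$ is negated under the re-indexing $w\mapsto s_\ap w$. Hence $s_\ap\,\tilde H(\lb) \;=\; -\,e^\ap\,\tilde H(\lb)$. Comparing the coefficients of $e^\beta$ on both sides — the left-hand side has coefficient $G_{s_\ap(\beta)}(q)$ while the right-hand side has coefficient $-G_{\beta-\ap}(q)$ — yields the universal identity $G_{s_\ap(\beta)}(q)+G_{\beta-\ap}(q)=0$ for every $\beta\in\XT$. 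Specialising to $\beta=\gamma+\ap$, and using $s_\ap(\gamma+\ap)=s_\ap(\gamma)-\ap$, produces the desired relation \eqref{eq:ind-lemma}.

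The only genuine obstacle is bookkeeping: keeping straight the $\rho$-shift, the sign from $\esi(s_\ap w)=-\esi(w)$, and the extra $e^\ap$ coming from $s_\ap(e^{-\rho})$. It is worth noting that the hypothesis $(\gamma,\ap^\vee)=-n<0$ plays no role in the derivation above; the identity is a universal functional equation valid for all $\gamma\in\XT$ and all $\ap\in\Pi$. The hypothesis matters only when \eqref{eq:ind-lemma} is used as a recursion, for then $s_\ap(\gamma)-\ap=\gamma+(n-1)\ap$, $\gamma+\ap$, and $s_\ap(\gamma)=\gamma+n\ap$ are all strictly larger than $\gamma$ in the $Q_+$-order, so that $\mf_\lb^\gamma(q)$ is expressed in terms of $q$-analogues already known by induction on $\hot(\lb-\gamma)$.
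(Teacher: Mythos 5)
Your proof is correct, and it takes a genuinely different route from the one the paper relies on. The paper simply cites Broer's result (Prop.~3.15 of \cite{br94}) and records that Broer's argument is cohomological: it uses the interpretation of $\mf_\lb^\mu(q)$ via Euler characteristics of sheaves on $G\times_B\ut$ and a push-forward from $G/B$ to the minimal parabolic $G/P_\ap$. You instead give a short, self-contained, purely algebraic derivation from the Weyl-type generating series~\eqref{eq:series-Lqa}: multiplying through by $(1-qe^{-\ap})$ clears exactly the factor of $\xi_q$ that is not $s_\ap$-symmetric, and the residual series $\tilde H(\lb)=\sum_\mu G_\mu(q)e^\mu$ then satisfies the clean functional equation $s_\ap\tilde H=-e^\ap\tilde H$; comparing coefficients of $e^\beta$ gives $G_{s_\ap\beta}+G_{\beta-\ap}=0$, which at $\beta=\gamma+\ap$ is precisely~\eqref{eq:ind-lemma}. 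Your bookkeeping checks out: $s_\ap$ permutes $\Delta^+\setminus\{\ap\}$ so the denominator is fixed, $s_\ap(e^{-\rho})=e^\ap e^{-\rho}$, and the alternating sum picks up the sign $\esi(s_\ap w)=-\esi(w)$ upon re-indexing; the formal-series manipulation is unambiguous because $s_\ap$ fixes the expansion of $\prod_{\nu\neq\ap}(1-qe^{-\nu})^{-1}$ factor by factor. Your remark that the hypothesis $(\gamma,\ap^\vee)<0$ is not needed for the identity itself (only for its use as a well-founded recursion) is also correct and is a modest strengthening over the statement as given. What Broer's cohomological approach buys, which your elementary argument does not, is additional structural information — e.g.\ the higher-vanishing and normality results that live in the Grothendieck-group formulation of \cite{br94} — but as a proof of the numerical identity~\eqref{eq:ind-lemma} alone, your route is shorter and more transparent.
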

\noindent 
In particular, for $n=1$, this formula contains only $\gamma$ and $\gamma+\ap$ and one merely 
obtains $\mf_\lb^\gamma(q)=q\, \mf_\lb^{\gamma+\ap}(q)$.
Broer's proof of the Induction Lemma exploits the cohomological interpretation of Lusztig's 
$q$-analogues discussed above, and includes the passage from $G/B$ to $G/P_\ap$, where
$P_\ap$ is the minimal parabolic subgroup corresponding to $\ap$.

Actually, the name "Induction Lemma" is assigned in~\cite{br94} to a certain preparatory result. But, we feel that it is more appropriate to associate such a name with Broer's Proposition~3.15.

It is observed in \cite[5.1]{rkg87} that Lusztig's $q$-analogues $\mf_{\lb}^\mu(q)$ satisfy the identity
\begin{equation}    \label{eq:series-Lqa}
   \sum_{\mu:\,\mu\curle \lb} \mf_{\lb}^\mu(q) e^\mu=
   \frac{\sum_{w\in W}\esi(w)e^{w(\lb+\rho)} }{e^\rho\prod_{\gamma\in\Delta^+}(1-qe^{-\gamma})}=
   \chi_\lb\cdot \prod_{\gamma\in\Delta^+} \frac{(1-e^{-\gamma})}{(1-qe^{-\gamma})}=\chi_\lb \xi_q \ .
\end{equation}
Here $\xi_q=\displaystyle\prod_{\gamma\in\Delta^+}\frac{1-e^{-\gamma}}{1-qe^{-\gamma}}$ is the
{\it degenerate Cherednik kernel\/}, and for $\lb=0$ one obtains 
\beq  \label{eq:cherednik-kern}
    \xi_q=\sum_{\mu\in Q_+}\mf_0^{-\mu}(q)e^{-\mu} .
\eeq
Thus, the coefficients of $\xi_q$ are certain Lusztig's $q$-analogues.
As an application of the Induction Lemma, 
we easily recover some known results on coefficients of $\xi_q$,
cf. Bazlov~\cite[Theorem\,3]{ba01}, 
Ion~\cite[Eq.\,(5.35)]{ion04}, and Viswanath~\cite[Prop.\,1]{visw}.

\begin{prop}  \label{prop:coef-cherednik}
If $\mu\in\Delta^+$, then $[e^{-\mu}](\xi_q)=\mf_0^{-\mu}(q)=q^{\hot(\mu)}- q^{\hot(\mu)-1}$.
\end{prop}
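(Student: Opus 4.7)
The plan is to prove the formula by induction on the height $h := \hot(\mu)$ of the positive root $\mu$, using Broer's Induction Lemma (Theorem~\ref{induction-lemma}) applied with $\lb = 0$.

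For the base case $h = 1$, i.e., $\mu = \ap \in \Pi$, I apply the Induction Lemma with $\gamma = -\ap$, so that $(\gamma,\ap^\vee) = -2$ and $s_\ap(\gamma) = \ap$. The identity \eqref{eq:ind-lemma} specialises to
\[
\mf_0^{-\ap}(q) + \mf_0^{0}(q) = q\bigl(\mf_0^{0}(q) + \mf_0^{\ap}(q)\bigr).
\]
Since $\mf_0^0(q) = 1$ (the monic polynomial of degree $0$) and $\mf_0^{\ap}(q) \equiv 0$ (because $\ap \not\curle 0$), this yields $\mf_0^{-\ap}(q) = q - 1$, matching the claimed formula.

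For the inductive step with $h \ge 2$, I first select a simple root $\ap$ with $n := (\mu,\ap^\vee) \ge 1$; such an $\ap$ exists because $(\mu,\mu) > 0$ forces $(\mu,\ap_i^\vee) > 0$ for some $i$. The $\ap$-string through $\mu$ then consists of the positive roots $\mu, \mu-\ap, \ldots, \mu - n\ap = s_\ap\mu$, and all three of $\mu-\ap$, $\mu-(n-1)\ap$, $s_\ap\mu$ have height strictly smaller than $h$ (note $h - n \ge 1$ since $s_\ap\mu \ne 0$), so the inductive hypothesis supplies their $q$-analogues. Applying the Induction Lemma with $\gamma = -\mu$ gives
\[
\mf_0^{-\mu}(q) + \mf_0^{-(\mu - (n-1)\ap)}(q) = q\bigl(\mf_0^{-(\mu-\ap)}(q) + \mf_0^{-s_\ap\mu}(q)\bigr).
\]
Substituting $\mf_0^{-\nu}(q) = q^{\hot(\nu)} - q^{\hot(\nu)-1}$ for each of the three terms of known height and solving is a one-line simplification that gives $\mf_0^{-\mu}(q) = q^h - q^{h-1}$. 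The case $n = 1$ is even simpler: the remark after Theorem~\ref{induction-lemma} gives $\mf_0^{-\mu}(q) = q\,\mf_0^{-(\mu-\ap)}(q)$ directly, and the inductive hypothesis on $\mu - \ap$ finishes it.

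The only point requiring real care is the selection of $\ap$ and the verification that the three auxiliary weights $-(\mu-k\ap)$ for $k = 1, n-1, n$ all lie in the inductive range as positive-root negatives, rather than as arbitrary anti-dominant weights to which the formula would not apply; this is guaranteed by the properties of the $\ap$-string. The final arithmetic is then mechanical, and no obstacle remains.
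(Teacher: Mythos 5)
Your proof is correct and follows essentially the same route as the paper: induction on $\hot(\mu)$ via the Induction Lemma applied with $\gamma = -\mu$ and a simple root $\alpha$ satisfying $(\mu,\alpha^\vee) = n > 0$. The only minor difference is that you derive the base case $\mf_0^{-\alpha}(q) = q-1$ by applying the Induction Lemma with $\gamma = -\alpha$, whereas the paper extracts the coefficient of $e^{-\alpha}$ directly from the product defining $\xi_q$.
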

\begin{proof}
 We argue by induction on $\hot(\mu)$.
 
 1) {\sl Base}: if $\mu\in\Delta^+$ is simple, then it is easily seen that $[e^{-\mu}](\xi_q)=q-1$.
 
 2) {\sl Step}: Suppose that $\hot(\mu)\ge 2$ and the assertion holds for all $\gamma\in\Delta^+$ with
 $\hot(\gamma)< \hot(\mu)$. Take any $\ap\in\Pi$ such that $(\mu, \ap^\vee)=n>0$. Then
 $s_\ap(\mu)=\mu-n\ap\in\Delta^+$ and applying \eqref{eq:ind-lemma} with $\gamma=-\mu$ we obtain
 \[
    \mf_0^{-\mu}(q)+\mf_\lb^{-\mu+(n-1)\ap}(q)=q(\mf_\lb^{-\mu+\ap}(q)+\mf_\lb^{-\mu+n\ap}(q)) .  
 \]
Since $\mu-\ap$ and  $\mu-(n-1)\ap$ are also positive roots, of smaller height,  using the induction 
assumption yields the desired expression for $\mf_0^{-\mu}(q)$.
\end{proof}

\begin{rmk}   \label{rmk:B-Ion-V}
Bazlov and Ion work with the usual (2-parameter) Cherednik kernel, and then specialise their formulae
to one-parameter case. They  use the general theory of Macdonald polynomials, whereas 
Viswanath provides a direct elementary approach to computing coefficients of  $\xi_q$. One
can notice that  Viswanath's note \cite{visw} contains implicitly an inductive formula for the coefficients of 
$\xi_q$. His argument basically proves that if $\beta\in Q_+$ and $s_i(\beta)=\beta-k\ap_i$ ($k>0$), then
\beq   \label{eq:tozhd-V}
   \mf_0^{-\beta}(q)=(q-1)\sum_{j=1}^{k-1}\mf_0^{-\beta+j\ap_i}(q)+ q{\cdot}\mf_0^{-s_i(\beta)}(q) .
\eeq
Actually, one needn't assume here that $s_i(\beta)\in Q_+$. If some of $\beta-j\ap_i$ do not belong
to $Q_+$, then the corresponding $q$-analogues are replaced by zero.
It is a simple exercise to deduce \eqref{eq:tozhd-V} from \eqref{eq:ind-lemma} with $\lb=0$, and vice versa.
[Left to the reader.]
\end{rmk}

Substituting \eqref{eq:cherednik-kern} in the equality 
$\sum_{\mu:\,\mu\curle \lb} \mf_{\lb}^\mu(q) e^\mu=\chi_\lb \xi_q$,
we obtain 
\beq    \label{eq:mul-lb-and-0}
    \mullq=\sum_{\gamma:\, \gamma\curge \mu}  m_\lb^\gamma \mf_0^{\mu-\gamma}(q) ,
\eeq
so that all $q$-analogues for $\VV_\lb$ can (theoretically) be computed once we know enough 
coefficients of $\xi_q$ and the usual weight multiplicities. 
But even for the adjoint representation, this approach requires more than merely the knowledge of
$\mf_0^{-\nu}(q)$ for $\nu\in\Delta^+$. For, $\gamma-\mu$ need not be a root in the above formula.
However, Eq.~\eqref{eq:mul-lb-and-0} has a curious consequence.

\begin{lm}    \label{lem:q=0}
The polynomials $\mf_\lb^\mu(q+1)$ have nonnegative coefficients for all $\mu$.
If $\mu$ is a weight of\/ $\VV_\lb$ and $\mu\ne\lb$, then $\mf_\lb^\mu(0)=0$. 
\end{lm}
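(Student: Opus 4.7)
The plan is to tackle the two assertions separately, both rooted in the expansion \eqref{eq:mul-lb-and-0}.

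For the first assertion, substituting $q\mapsto q+1$ in \eqref{eq:mul-lb-and-0} gives
\[
   \mf_\lb^\mu(q+1)=\sum_{\beta\in Q_+}m_\lb^{\mu+\beta}\,\mf_0^{-\beta}(q+1),
\]
and since the weight multiplicities $m_\lb^{\mu+\beta}$ are nonnegative, it suffices to show that $\mf_0^{-\beta}(q+1)\in\BZ_{\ge 0}[q]$ for every $\beta\in Q_+$. I would prove this by induction on $\hot(\beta)$. The base case $\beta=0$ is trivial since $\mf_0^0\equiv 1$. For the inductive step, given $\beta\in Q_+\setminus\{0\}$ with $\beta=\sum c_i\ap_i$, the positivity $0<(\beta,\beta)=\sum c_i(\beta,\ap_i)$ forces some $\ap_i$ with $c_i>0$ and $k:=(\beta,\ap_i^\vee)>0$. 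Applying \eqref{eq:tozhd-V} with $\lb=0$ after the substitution $q\mapsto q+1$ yields
\[
   \mf_0^{-\beta}(q+1)=q\sum_{j=1}^{k-1}\mf_0^{-(\beta-j\ap_i)}(q+1)+(q+1)\,\mf_0^{-s_i(\beta)}(q+1),
\]
where every index on the right has strictly smaller height, with the convention of Remark~\ref{rmk:B-Ion-V} that the $q$-analogue vanishes whenever its index lies outside $-Q_+$. The induction hypothesis together with the nonnegativity of the factors $q$ and $q+1$ then delivers the claim.

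For the second assertion, I would work from the definition \eqref{eq:def-q-an}. Since $\mathcal P_q(\nu)\vert_{q=0}=\delta_{\nu,0}$, evaluation at $q=0$ collapses to
\[
   \mf_\lb^\mu(0)=\sum_{w\in W}\esi(w)\bigl[w(\lb+\rho)=\mu+\rho\bigr].
\]
Because $\lb+\rho$ is strictly dominant, hence $W$-regular, at most one $w$ contributes, so $\mf_\lb^\mu(0)\ne 0$ forces $\mu+\rho\in W(\lb+\rho)$ and in particular $\|\mu+\rho\|^2=\|\lb+\rho\|^2$. The task thus reduces to the classical estimate $\|\mu+\rho\|^2\le\|\lb+\rho\|^2$ for every weight $\mu$ of $\VV_\lb$, with equality only at $\mu=\lb$. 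This I would derive from the fact that $\mu$ lies in the convex hull of $W\lb$, combined with the strict convexity of $\|\cdot\|^2$: the maximum of $\|x+\rho\|^2$ over that hull is attained at a vertex $w\lb+\rho$, and the identity
\[
   \|\lb+\rho\|^2-\|w\lb+\rho\|^2=2(\lb-w\lb,\rho),
\]
together with $\lb-w\lb\in Q_+$ and the strict dominance of $\rho$, shows that this maximum is realised uniquely at $w\lb=\lb$. Hence $\mu=\lb$, contradicting the hypothesis.

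The main technical point is in part (a), where one must check that the Viswanath recurrence closes properly on smaller-height indices under the vanishing convention; part (b) is then a clean packaging of the $W$-regularity of $\lb+\rho$ with the standard strict-convexity argument for the norm.
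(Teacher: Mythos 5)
Your proposal is correct, and both halves depart from the paper's route in ways that are worth noting. For the first claim, the paper bypasses any induction: it rewrites the degenerate Cherednik kernel as the explicit product $\xi_q=\prod_{\gamma\in\Delta^+}\bigl(1+\sum_{n\ge 0}q^n(q-1)e^{-(n+1)\gamma}\bigr)$, from which it is immediate that every coefficient of $\xi_{q+1}$ lies in $\BZ_{\ge 0}[q]$, and then transfers this to arbitrary $\lb$ via \eqref{eq:mul-lb-and-0}, exactly as you do. Your inductive argument on $\hot(\beta)$ through the Viswanath recurrence \eqref{eq:tozhd-V} reaches the same conclusion and is logically sound (the recurrence does close properly under the vanishing convention, and $(\beta,\beta)>0$ does produce the needed $\ap_i$), but it is somewhat heavier than necessary and relies on a formula the paper only states as an exercise; the product expansion gives the nonnegativity of $\mf_0^{-\beta}(q+1)$ in one line. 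For the second claim, the paper specialises $\xi_q$ at $q=0$ via Weyl's denominator formula, obtaining $\mf_\lb^\mu(0)=\sum_{w\in W}\esi(w)\,m_\lb^{\mu+\rho-w\rho}$, and then quotes Klimyk's formula to identify this alternating sum with $\delta_{\lb\mu}$; you instead read off $\mf_\lb^\mu(0)=\sum_{w\in W}\esi(w)\,[\,w(\lb+\rho)=\mu+\rho\,]$ directly from \eqref{eq:def-q-an} and close with the classical strict-convexity estimate $\|\mu+\rho\|<\|\lb+\rho\|$ for $\mu\ne\lb$ a weight of $\VV_\lb$. The paper itself remarks parenthetically that one can refer directly to \eqref{eq:def-q-an}, so your variant is anticipated; what it buys is self-containment — no appeal to Klimyk's formula — at the cost of invoking the convexity lemma on the weight polytope. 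Both arguments are standard and equivalent in substance, so either is a legitimate proof.
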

\begin{proof}
1) By the very definition of $\xi_q$, we have  
$
  \displaystyle \xi_{q}=\prod_{\gamma\in\Delta^+}\bigl(1+\sum_{n\ge 0}q^{n}(q-1)e^{(n+1)\gamma}\bigr)
$.
Whence all polynomials $\mf_0^{\nu}(q+1)$, the coefficients of $\xi_{q+1}$, 
have nonnegative coefficients. Using Eq.~\eqref{eq:mul-lb-and-0},  we carry it over to
arbitrary $\lb\in\PT$.

2) By Weyl's denominator formula, $\xi_q\vert_{q=0}=\sum_{w\in W} \esi(w)e^{w\rho-\rho}$. Therefore,
$\mf_0^{\nu}(0)=\esi(w)$ if $\nu=w\rho-\rho$, and is zero otherwise. Hence
$\mf_\lb^\mu(0)=\sum_{w\in W}\esi(w) m_\lb^{\mu+\rho-w\rho}$. For a weight $\mu$ of $\VV_\lb$, 
the latter equals $\delta_{\lb\mu}$ by Klimyk's formula, see e.g. \cite[\S\,3.8, Prop.\,C]{samel}.
\\ \, [One can also refer directly to Eq.~\eqref{eq:def-q-an}.]
\end{proof}

\section{All $q$-analogues  for the adjoint and little adjoint representations} 
\label{sect:adj+little}

\noindent
In this section, $\g$ is simple, $\theta$ is the highest root, and $\theta_s$ is the short dominant root 
in $\Delta^+$. Here we compute $q$-analogues for all weight multiplicities of the adjoint and little adjoint 
representations of $\g$ and show that their sum depends essentially only on the $q$-analogue of the zero
weight multiplicity and the Coxeter number of $\g$.

Let $m_i=m_i(\theta)$, $i=1,\dots,r$, be the {\it exponents\/} of (the adjoint representation of) $\g$
and $h$ the {\it Coxeter number\/} of $\g$.
We assume that $m_1\le m_2\le \ldots \le m_r$, hence $m_1=1$ and $m_r=h-1=\hot(\theta)$.
In the simply-laced case, all roots are assumed to be short. That is, the argument referring to long 
roots has to be omitted if $\g$ is of type {\sf A-D-E}.

\begin{thm}    \label{thm:all-q-adj}
 For any $\mu\in\Delta\cup\{0\}$, the polynomial $\multq$ depends only on $\hot(\theta-\mu)$, i.e., on
 $\hot(\mu)$. More precisely, 
 \begin{itemize}
\item[\sf (i)] \ $\mf_\theta^0(q)=q^{m_1}+\ldots+ q^{m_r}$;
\item[\sf (ii)] \ If $\mu\in\Delta^+$, then $\multq=q^{\hot(\theta-\mu)}=q^{h-1-\hot(\mu)}$;
\item[\sf (iii)] \ if $\ap\in\Pi$, then $\mf_\theta^{-\ap}(q)=(q-1)\mf_\theta^0(q)+q^{h-1}$;
\item[\sf (iv)] \   If $\mu\in\Delta^+$, then 
$\mf_\theta^{-\mu}(q)=q^{\hot(\mu)-1}{\cdot}\mf_\theta^{-\ap}(q)$. 
\end{itemize}
\end{thm}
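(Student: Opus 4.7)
The plan is to derive the four parts in order, using the Induction Lemma (Theorem~\ref{induction-lemma}) together with the known non-negativity of coefficients of $\mullq$ for dominant~$\mu$. Part (i) is immediate from the classical identification of $\mf_\theta^0(q)$ with the generating polynomial of the usual exponents of $\g$, recalled just after Theorem~\ref{broer-krit}.

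For (ii), I would run a reverse induction on $\hot(\theta - \mu)$, the base case $\mu = \theta$ being trivial. Two subcases arise in the inductive step. If $\mu$ is \emph{not} dominant, then there is $\ap \in \Pi$ with $(\mu, \ap^\vee) = -n < 0$; since $\mu \ne \ap$, the $\ap$-string $\mu, \mu + \ap, \dots, \mu + n\ap = s_\ap(\mu)$ consists entirely of positive roots, all with $\hot(\theta - \cdot) < \hot(\theta - \mu)$. Applying the Induction Lemma with $\gamma = \mu$, substituting $\mf_\theta^{\mu+k\ap}(q) = q^{h-1-\hot(\mu)-k}$ for $k \ge 1$ from the induction hypothesis, and observing that the terms of exponent $h-\hot(\mu)-n$ cancel, yields $\mf_\theta^\mu(q) = q^{h-1-\hot(\mu)}$. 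The only dominant positive roots are $\theta$ and, when $\g$ is non-simply-laced, the highest short root $\theta_s$. For $\mu = \theta_s$ the Induction Lemma does not give direct access, so I would instead combine the non-negativity of $\mf_\theta^{\theta_s}(q)$ with the basic properties listed before Theorem~\ref{broer-krit}: the polynomial is monic of degree $\hot(\theta - \theta_s)$ and satisfies $\mf_\theta^{\theta_s}(1) = m_\theta^{\theta_s} = 1$. A monic polynomial with non-negative coefficients summing to~$1$ must be a monomial, forcing $\mf_\theta^{\theta_s}(q) = q^{\hot(\theta - \theta_s)}$.

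Part (iii) is a single application of the Induction Lemma with $\gamma = -\ap$: since $(-\ap, \ap^\vee) = -2$ and $s_\ap(-\ap) = \ap$, the lemma gives $\mf_\theta^{-\ap}(q) + \mf_\theta^0(q) = q\bigl(\mf_\theta^0(q) + \mf_\theta^\ap(q)\bigr)$. Combined with $\mf_\theta^\ap(q) = q^{h-2}$ from~(ii), this rearranges to the claimed identity. A pleasant byproduct is that the right-hand side does not depend on $\ap \in \Pi$, hence neither does $\mf_\theta^{-\ap}(q)$---a fact that feeds into~(iv).

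Finally, for (iv) I would induct on $\hot(\mu)$, with $\mu \in \Pi$ as a tautological base. For a non-simple positive root $\mu$, pick $\ap \in \Pi$ with $(\mu, \ap^\vee) = n > 0$ (such $\ap$ exists because $(\mu, \mu) > 0$); since $\mu \ne \ap$, the $\ap$-string $\mu - n\ap, \dots, \mu - \ap, \mu$ lies in $\Delta^+$ and consists of roots of strictly smaller height than $\mu$. Apply the Induction Lemma to $\gamma = -\mu$; each $\mf_\theta^{-\mu+k\ap}(q)$ for $k \in \{1, n-1, n\}$ equals $q^{\hot(\mu)-k-1}\, \mf_\theta^{-\ap}(q)$ by the induction hypothesis, and a short cancellation (the terms of exponent $\hot(\mu)-n$ drop out) gives $\mf_\theta^{-\mu}(q) = q^{\hot(\mu)-1}\, \mf_\theta^{-\ap}(q)$. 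By~(iii) this expression is independent of the auxiliary choice of $\ap$. The main obstacle throughout is the treatment of $\mu = \theta_s$ in part~(ii), where the Induction Lemma gives no direct handle and one must fall back on the Lusztig--Kato positivity result.
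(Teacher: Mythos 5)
Your proof is correct and uses the same two key tools as the paper — Broer's Induction Lemma together with a positivity input from Theorem~\ref{broer-krit} (or the Kazhdan--Lusztig relation) — so it is essentially the same approach. The one genuine streamlining: in (ii) and (iv) the paper first settles \emph{all} short positive roots by Broer's criterion and then runs the Induction Lemma only for long roots (quoting the already-known short-root values in the $n=2,3$ cases), whereas you run a single uniform induction on $\hot(\theta-\mu)$ over all non-dominant positive roots and invoke positivity only for the lone dominant short root $\theta_s$; this is a slightly cleaner arrangement of the same ingredients, and the cancellation you observe (the terms of exponent $h-\hot(\mu)-n$ dropping out) is exactly what makes the length distinction unnecessary.
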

\begin{proof}
(i)  This is well-known and goes back to  Hesselink~\cite{he80} and Peterson.
See also \cite[Theorem\,5.5]{ion04} and \cite[p.\,2]{visw}.

(ii) If $\mu\in\Delta^+$ is short, then $(\mu,\gamma^\vee)\ge -1$ for all $\gamma\in\Delta^+$ and therefore
$\multq$ has nonnegative coefficients by Broer's criterion
(Theorem~\ref{broer-krit}). 
Since $\deg\multq=\hot(\theta-\mu)$ and $m_\theta^\mu=1$, one has the only possibility for $\multq$.

If $\Delta$ has two root lengths and $\mu\in\Delta^+$ is long, then we argue by induction in 
$\hot(\theta-\mu)$. For $\mu=\theta$, one has
$\mf_\theta^\theta(q)=1$. To perform the induction step, assume that $\multq=q^{\hot(\theta-\mu)}$ for
some $\mu$ and $\mu\not\in\Pi$.
Then there is $\ap\in\Pi$ such that $(\ap,\mu)>0$ and hence $s_\ap(\mu)\in \Delta^+$ and 
$\hot(s_\ap(\mu))< \hot(\mu)$.
Here $\mu=s_\ap(\mu)+n\ap$ with $n\in\{1,2,3\}$, and by the Induction Lemma (Theorem~\ref{induction-lemma}) applied to $\gamma=s_\ap(\mu)$ we have
\[
    \mf_\theta^{s_\ap(\mu)}(q)+\mf_\theta^{\mu-\ap}(q)=q( \mf_\theta^{s_\ap(\mu)+\ap}(q)+
     \mf_\theta^{\mu}(q)) .
\]
For $n=1$, we immediately obtain that $\mf_\theta^{s_\ap(\mu)}(q)=q\multq=
q^{\hot(\theta-s_\ap(\mu))}$. 
For $n=2$ or $3$, we get the same conclusion using 
the fact that the roots $s_\ap(\mu)+\ap$ and $\mu-\ap$  are short  (and hence the corresponding 
$q$-analogues are already known).

(iii) Passing from $\ap\in\Pi$ to $-\ap$ (crossing over $0$) is also accomplished via the use of the 
Induction Lemma. Since $s_\ap(-\ap)=-\ap + 2\ap$, we have
\[
  \mf_\theta^{-\ap}(q)+\mf_\theta^0(q)=q(\mf_\theta^0(q)+\mf_\theta^\ap(q)) ,
\]
and it is already proved in part (ii) that $\mf_\theta^\ap(q)=q^{\hot(\theta-\ap)}=q^{h-2}$.

(iv) Going down from $-\ap$ ($\ap\in\Pi$), we again use the Induction Lemma. First, we prove the assertion for all negative short roots using the fact that  if $\mu\in \Delta^+$ is short and 
$\mu\ne\theta_s$, then there is $\ap\in\Pi$ such that $(\mu,\ap^\vee)=-1$ and hence
$s_\ap(-\mu)=-\mu-\ap$. Afterwards, we prove the assertion for the long roots,
as it was done in part (ii).
\end{proof}

\begin{rmk}
The simplest formula for $\mf_\theta^{-\ap}(q)$, $\ap\in\Pi$, occurs if $\g=\mathfrak{sl}_{r+1}$, where 
there are only three summands. Namely, $\mf_\theta^{-\ap}(q)=q^{r+1}+q^r-q$. But for 
$\g=\mathfrak{sp}_{2r}$ or $\mathfrak{so}_{2r+1}$, $r\ge 2$, we obtain 
$\mf_\theta^{-\ap}(q)=q^2+q^4+\ldots +q^{2r}-(q+q^3+\ldots+q^{2r-3})$.
\end{rmk}

The notation $\mu\dashv\VV_\lb$ means that $\mu$ is a weight of $\VV_\lb$.
\begin{thm}   \label{thm:sum-all-adj}
We have $\displaystyle\sum_{\mu\dashv \g} \multq=\mf_\theta^0(q)(\mf_\theta^0(q)-r+1)+ \frac{\mf_\theta^{0}(q)}{q}{\cdot} \frac{1-q^h}{1-q} $ or, equivalently,
\beq   \label{eq:sum-all-adj}
     \sum_{\mu\dashv \g}m_\theta^\mu \multq=\mf_\theta^0(q)^2+ \frac{\mf_\theta^{0}(q)}{q}\cdot
     \frac{1-q^h}{1-q} .
\eeq
\end{thm}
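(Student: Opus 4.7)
The plan is to decompose $\sum_\mu m_\theta^\mu\multq$ by weight type, to apply Theorem~\ref{thm:all-q-adj} to express each $\multq$ explicitly, and then to invoke the conjugate-partition duality between exponents and root-heights (mentioned in the introduction) to reduce everything to $\mf_\theta^0(q)$.

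Since the weights of $\g=\VV_\theta$ are $0$ (multiplicity $r$) and the roots $\Delta$ (each of multiplicity $1$), I put
\[
\Sigma:=\sum_{\mu\dashv\g}m_\theta^\mu\multq=r\,\mf_\theta^0(q)+A(q)+B(q),
\]
with $A(q):=\sum_{\mu\in\Delta^+}\multq$ and $B(q):=\sum_{\mu\in\Delta^+}\mf_\theta^{-\mu}(q)$. Let $\eta_i=\#\{\mu\in\Delta^+:\hot(\mu)=i\}$. Theorem~\ref{thm:all-q-adj}(ii) yields $A(q)=\sum_{i=1}^{h-1}\eta_i\,q^{h-1-i}$. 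By parts (iii) and (iv) the quantity $\mf_\theta^{-\ap}(q)=(q-1)\mf_\theta^0(q)+q^{h-1}$ is independent of $\ap\in\Pi$, hence
\[
B(q)=\bigl((q-1)\mf_\theta^0(q)+q^{h-1}\bigr)\cdot C(q),\qquad C(q):=\sum_{\mu\in\Delta^+}q^{\hot(\mu)-1}=\sum_{i=1}^{h-1}\eta_i\,q^{i-1}.
\]

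The crucial step is to express $A(q)$ and $C(q)$ through $\mf_\theta^0(q)$ alone. The conjugate-partition relation $\eta_i=\#\{j:m_j\ge i\}$, after exchanging the order of summation, gives
\[
C(q)=\sum_{j=1}^{r}\bigl(1+q+\cdots+q^{m_j-1}\bigr)=\frac{\mf_\theta^0(q)-r}{q-1},
\]
while the palindromy $q^h\mf_\theta^0(q^{-1})=\mf_\theta^0(q)$ (equivalent to $m_j+m_{r+1-j}=h$) together with the identity $A(q)=q^{h-2}C(q^{-1})$ yields
\[
A(q)=\frac{\mf_\theta^0(q)-rq^h}{q(1-q)}.
\]

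Assembling the pieces: the contribution $(q-1)\mf_\theta^0(q)\,C(q)=\mf_\theta^0(q)^2-r\mf_\theta^0(q)$ from $B(q)$ cancels the initial $r\mf_\theta^0(q)$ and produces the summand $\mf_\theta^0(q)^2$; a short common-denominator calculation then gives
\[
A(q)+q^{h-1}C(q)=\frac{\mf_\theta^0(q)(1-q^h)}{q(1-q)}=\frac{\mf_\theta^0(q)}{q}\cdot\frac{1-q^h}{1-q},
\]
which establishes~\eqref{eq:sum-all-adj}. The first displayed form follows from the second, since the two left-hand sides differ by $(m_\theta^0-1)\mf_\theta^0(q)=(r-1)\mf_\theta^0(q)$. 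I expect the only real obstacle is correctly invoking the exponent/root-height conjugate-partition duality and the accompanying palindromy; once those are in hand, the derivation is bookkeeping.
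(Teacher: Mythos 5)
Your proof is correct and follows essentially the same route as the paper: decompose the sum over $\Delta^+$, $\{0\}$, and $\Delta^-$, apply Theorem~\ref{thm:all-q-adj} to evaluate each $\multq$, and then use the Kostant duality between exponents and root-heights (together with the symmetry $m_i+m_{r+1-i}=h$, which is your palindromy $q^h\mf_\theta^0(q^{-1})=\mf_\theta^0(q)$) to express everything through $\mf_\theta^0(q)$. Your $A(q)$ and $B(q)$ are exactly the paper's $\eus S_+$ and $\eus S_-$; the only cosmetic difference is that you organize the string decomposition as a relation $C(q)=\frac{\mf_\theta^0(q)-r}{q-1}$ and obtain $A(q)$ by the substitution $A(q)=q^{h-2}C(q^{-1})$, whereas the paper sums the strings for $\eus S_+$ and $\eus S_-$ separately.
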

\begin{proof}
Since $m_\theta^\mu=1$ for $\mu\in\Delta$ and $m_\theta^0=r$, both formulae are equivalent. 
In fact, we compute separately the sums 
\[
   \eus S_+=\sum_{\mu\in\Delta^+} \multq \ \ \text{ and }\ \eus S_-=\sum_{\mu\in\Delta^-} \multq .
\]
Recall that the partition $(m_r,\dots,m_1)$ is dual to the partition  $(n_1,n_2,\dots)$, where 
$n_i=\#\{\gamma\in\Delta^+\mid \hot(\gamma)=i\}$ \cite{ko59}. Therefore, $\Delta^+$ can be partitioned into 
the strings of roots of lengths $m_1,m_2,\dots,m_r$ such that the $i$-th string contains the roots 
of height $1,2,\dots,m_i$. Then, by Theorem~\ref{thm:all-q-adj}(ii), the sum over the $i$-th string equals
\[
  q^{m_r-1}+q^{m_r-2}+\ldots + q^{m_r-m_i}=\frac{q^{m_r-m_i}-q^{m_r}}{1-q} .
\]
Since $m_i+m_{r-i+1}=m_r+1=h$, 
the total sum over $\Delta^+$ can be written as 
\[
   \eus S_+=\sum_{i=1}^r\frac{q^{m_i-1}-q^{h-1}}{1-q}=\sum_{i=1}^r\frac{q^{m_i}-q^{h}}{q(1-q)}=
   \frac{\mf_\theta^0(q)-r q^h}{q(1-q)} .
\]
Likewise, using the corresponding strings of negative roots, one proves that 
\[
  \eus S_-=\displaystyle((q-1)\mf_\theta^0(q)+q^{h-1}){\cdot}\frac{r-\mf_\theta^0(q)}{1-q} .
\]
It then remains to simplify the sums $\eus S_++\mf_\theta^0(q)+\eus S_-$ and
$\eus S_++r\mf_\theta^0(q)+\eus S_-$.
\end{proof}

Similar results are valid for the little adjoint representation of $G$. Let $\Delta_s$ denote the set of all short roots, hence $\{\theta_s\}=\Delta_s\cap\PT$. Set $\Pi_s=\Pi\cap\Delta_s$ and $l=\#(\Pi_s)$.
Recall that the set of weights of $\vts$ is $\Delta_s\cup\{0\}$, $m_{\theta_s}^0=l$, and $m_{\theta_s}^\mu=1$  for $\mu\in \Delta_s$.

The following observation is a particular case of \cite[Theorem\,5.5]{ion04}, and we provide a proof
for reader's convenience.
\begin{lm}    \label{lm:zero-q-little}
Let $(n_{1s},n_{2s},\dots)$ be the partition of $\#(\Delta_s^+)$ with $n_{i,s}=\#\{\gamma\in\Delta_s^+\mid
\hot(\gamma)=i\}$, in particular, $\eta_{1,s}=l$.
If $(e_1,e_2,\dots,e_l)$ is the dual partition,
then $\mf_{\theta_s}^0(q)=q^{e_1}+\ldots+ q^{e_l}$.
\end{lm}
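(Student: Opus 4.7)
The plan is to apply equation~\eqref{eq:mul-lb-and-0} with $\lb=\theta_s$ and $\mu=0$, invoke Proposition~\ref{prop:coef-cherednik} to evaluate each resulting summand, and finish with a routine conjugate-partition identity.

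First, I record the weight structure of $\vts$: the set of weights is $\Delta_s\cup\{0\}$, with $m_{\theta_s}^0=l$ and $m_{\theta_s}^\gamma=1$ for $\gamma\in\Delta_s$. Among these weights, only $0$ and the short positive roots lie in $Q_+$. Since $\mf_0^0(q)=1$, specialising \eqref{eq:mul-lb-and-0} to $\mu=0$ yields
\[
    \mf_{\theta_s}^0(q) \;=\; l \;+\; \sum_{\gamma\in\Delta_s^+}\mf_0^{-\gamma}(q).
\]

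Next, Proposition~\ref{prop:coef-cherednik} evaluates each of these coefficients: $\mf_0^{-\gamma}(q)=q^{\hot(\gamma)}-q^{\hot(\gamma)-1}$ for every $\gamma\in\Delta^+$, which in particular covers every short positive root. Grouping the summands by height, I obtain
\[
    \mf_{\theta_s}^0(q) \;=\; l \;+\; \sum_{i\ge 1} n_{i,s}\bigl(q^i - q^{i-1}\bigr).
\]

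The remaining step is purely combinatorial. Since $n_{1,s}=l$, the constant $l$ cancels the $i=1$ contribution to the $-q^{i-1}$ part, and the entire expression telescopes into
\[
    \mf_{\theta_s}^0(q) \;=\; \sum_{i\ge 1}\bigl(n_{i,s}-n_{i+1,s}\bigr)\, q^i .
\]
By the very definition of the conjugate partition, $n_{i,s}-n_{i+1,s}$ is exactly the cardinality of $\{j\in\{1,\dots,l\}\mid e_j=i\}$, so the last sum equals $\sum_{j=1}^l q^{e_j}$, as claimed. I do not foresee any substantive obstacle: both analytic inputs are already in place in the preceding sections, and the rest is a short telescoping combined with the standard reinterpretation of partition conjugation.
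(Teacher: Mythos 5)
Your argument is correct and follows exactly the route the paper takes: specialise Eq.~\eqref{eq:mul-lb-and-0} at $\lb=\theta_s$, $\mu=0$, evaluate the coefficients $\mf_0^{-\gamma}(q)$ via Proposition~\ref{prop:coef-cherednik}, cancel the constant $l=n_{1,s}$, and read off the coefficient $n_{i,s}-n_{i+1,s}$ of $q^i$ as the multiplicity of $i$ in the conjugate partition. You merely spell out the weight structure of $\vts$ and the telescoping a bit more explicitly than the paper does.
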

\begin{proof}
By Proposition~\ref{prop:coef-cherednik} and \eqref{eq:mul-lb-and-0}, we have
\[
\mf_{\theta_s}^0(q)=\sum_{\gamma\in Q_+} m_{\theta_s}^\gamma \mf_0^{-\gamma}(q)=l+\sum_{\mu\in\Delta_s^+}(q^{\hot(\mu)}-q^{\hot(\mu)-1}) .
\]
Since $\#(\Pi_s)=l$, the term $l$ cancels out and the coefficient of $q^j$ equals 
$n_{j,s}-n_{j+1,s}$  for $j\ge 1$. On the other hand, the number of parts $j$ in the dual partition also equals
$n_{j,s}-n_{j+1,s}$.
\end{proof}

An easy verification shows that, for the root systems with two root lengths, the generalised exponents 
$e_1,\dots,e_l$ of the little adjoint representation are:

$\GR{B}{n}$ -- \ $n\ (l=1)$;  $\GR{C}{n}$ -- \ $2,4,\dots,2n{-}2\ (l=n{-}1)$; $\GR{F}{4}$ -- \ $4,8\ (l=2)$; 
$\GR{G}{2}$ -- \ $3\ (l=1)$.

\noindent In particular, if $e_1\le\dots \le e_l$, then $e_i+e_{l+1-i}=h$ for all $i$.
\begin{thm}    \label{thm:all-q-little}
For any $\mu\in\Delta_s\cup\{0\}$, the polynomial $\mulsq$ depends only on $\hot(\theta_s-\mu)$, i.e., on
 $\hot(\mu)$. More precisely, 
 \begin{itemize}
\item[\sf (i)] \ $\mf_{\theta_s}^0(q)=q^{e_1}+\ldots+ q^{e_l}$;
\item[\sf (ii)] \ If $\mu\in\Delta_s^+$, then $\mulsq=q^{\hot(\theta_s-\mu)}$;
\item[\sf (iii)] \ if $\ap\in\Pi_s$, then $\mf_{\theta_s}^{-\ap}(q)=(q-1)\mf_{\theta_s}^0(q)+q^{\hot(\theta_s)}$;
\item[\sf (iv)] \   If $\gamma\in\Delta_s^+$, then 
$\mf_\theta^{-\gamma}(q)=q^{\hot(\gamma)-1}{\cdot}\mf_{\theta_s}^{-\ap}(q)$. 
\end{itemize}

\end{thm}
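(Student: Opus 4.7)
My plan is to follow the structure of the proof of Theorem~\ref{thm:all-q-adj}, with the pleasant simplification that we deal only with short roots throughout, so in particular Broer's criterion will handle part~(ii) uniformly without a separate argument for the ``long'' case.

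Part~(i) is exactly Lemma~\ref{lm:zero-q-little}, so nothing is left to prove. For part~(ii), I would invoke Broer's criterion (Theorem~\ref{broer-krit}). For every $\mu\in\Delta_s^+$ and $\nu\in\Delta^+$, one has $(\mu,\nu^\vee)\ge -1$: indeed, $(\mu,\nu^\vee)\le -2$ would force $|\nu|\le|\mu|$ (so $\nu$ is short as well) together with $\mu=-\nu$, which contradicts $\nu\in\Delta^+$. Hence $\mulsq$ has nonnegative coefficients. Since $\mulsq$ is monic of degree $\hot(\theta_s-\mu)$ with $\mulsq(1)=m_{\theta_s}^\mu=1$, it is forced to be the single monomial $q^{\hot(\theta_s-\mu)}$.

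For part~(iii), I would apply the Induction Lemma (Theorem~\ref{induction-lemma}) with $\gamma=-\ap$ and $\ap\in\Pi_s$, so that $(-\ap,\ap^\vee)=-2$, $s_\ap(-\ap)=\ap$, and $s_\ap(-\ap)-\ap=0$. This yields
\[
   \mf_{\theta_s}^{-\ap}(q)+\mf_{\theta_s}^0(q)=q\bigl(\mf_{\theta_s}^0(q)+\mf_{\theta_s}^{\ap}(q)\bigr),
\]
and substituting $\mf_{\theta_s}^{\ap}(q)=q^{\hot(\theta_s)-1}$ from part~(ii) gives the desired formula. (As a bonus this shows $\mf_{\theta_s}^{-\ap}(q)$ is independent of the choice of $\ap\in\Pi_s$, which is needed in the next part.)

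For part~(iv), the argument would be induction on $\hot(\gamma)$ for $\gamma\in\Delta_s^+$, with the base case $\gamma\in\Pi_s$ being trivial in view of part~(iii). For $\gamma\notin\Pi_s$, pick $\ap\in\Pi$ with $(\gamma,\ap^\vee)>0$. The key subtle point—and the only real obstacle—is that for a short root $\gamma\ne\ap$ one necessarily has $(\gamma,\ap^\vee)=1$ (the value $2$ requires $\gamma=\ap$, since it would force $|\gamma|=|\ap|$ and parallel alignment), so that $\gamma-\ap=s_\ap(\gamma)\in\Delta_s^+$ of height $\hot(\gamma)-1$; here the $W$-invariance of root length guarantees that $\gamma-\ap$ is again short. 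Applying the Induction Lemma to the weight $-\gamma$ with $n=1$ then yields $\mf_{\theta_s}^{-\gamma}(q)=q\,\mf_{\theta_s}^{-(\gamma-\ap)}(q)$, and the inductive hypothesis finishes the step, producing the factor $q^{\hot(\gamma)-1}\mf_{\theta_s}^{-\beta}(q)$ for any fixed $\beta\in\Pi_s$.
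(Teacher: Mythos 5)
Your proposal is correct and follows exactly the route the paper intends: the paper's own proof just says ``similar to Theorem~\ref{thm:all-q-adj},'' and you have reconstructed that argument faithfully, with the pleasant observation (correct) that since all weights of $\vts$ other than $0$ are short roots, Broer's criterion applies uniformly in part~(ii) and the separate long-root induction from the adjoint case is not needed. Your verification that $(\gamma,\ap^\vee)\le 1$ for short $\gamma\ne\ap$ and the resulting $n=1$ induction in part~(iv) are sound, and the top-down induction on $\hot(\gamma)$ is the mirror image of the paper's bottom-up chain via $(\mu,\ap^\vee)=-1$.
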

\begin{proof}
Part (i) is the subject of Lemma~\ref{lm:zero-q-little}. The proof of other parts is similar to those of 
Theorem~\ref{thm:all-q-adj}. 
\end{proof}

\begin{thm}   \label{thm:sum-all-little}
We have $\displaystyle\sum_{\mu\dashv \vts} \mulsq=
\mf_{\theta_s}^0(q)(\mf_{\theta_s}^0(q)-l+1)+ 
\frac{\mf_\theta^{0}(q)}{q^{h-\hot(\theta_s)}}{\cdot} \frac{1-q^h}{1-q} $ or, equivalently,
\beq   \label{eq:sum-all-little}
     \sum_{\mu\dashv \vts}m_{\theta_s}^\mu \mulsq=\mf_{\theta_s}^0(q)^2+ 
     \frac{\mf_{\theta_s}^{0}(q)}{q^{h-\hot(\theta_s)}}\cdot \frac{1-q^h}{1-q} .
\eeq
\end{thm}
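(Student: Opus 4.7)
The plan is to mimic the proof of Theorem~\ref{thm:sum-all-adj}: partition the sum according to sign, compute $\eus S_+:=\sum_{\mu\in\Delta_s^+}\mulsq$ and $\eus S_-:=\sum_{\mu\in\Delta_s^-}\mulsq$ separately via Theorem~\ref{thm:all-q-little}, then combine with the zero-weight term (multiplicity $l$) and simplify.  Since $m_{\theta_s}^\mu=1$ for $\mu\in\Delta_s$ and $m_{\theta_s}^0=l$, the two versions of the identity differ only by the factor $(l-1)\mf_{\theta_s}^0(q)$, so it suffices to establish the weighted formula.

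For $\eus S_+$, I would use Lemma~\ref{lm:zero-q-little} in the stronger form that $\Delta_s^+$ decomposes into $l$ strings, with the $i$-th string comprising the short positive roots of heights $1,2,\dots,e_i$.  By Theorem~\ref{thm:all-q-little}(ii), each such string contributes the geometric sum $\sum_{j=1}^{e_i}q^{\hot(\theta_s)-j}=(q^{\hot(\theta_s)-e_i}-q^{\hot(\theta_s)})/(1-q)$.  Invoking the symmetry $e_i+e_{l+1-i}=h$ recorded just before the theorem, one reindexes $\sum_{i=1}^l q^{\hot(\theta_s)-e_i}=q^{-(h-\hot(\theta_s))}\mf_{\theta_s}^0(q)$, giving
\[
   \eus S_+=\frac{q^{-(h-\hot(\theta_s))}\mf_{\theta_s}^0(q)-l\,q^{\hot(\theta_s)}}{1-q}.
\]

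For $\eus S_-$, parts (iii) and (iv) of Theorem~\ref{thm:all-q-little} factor each summand uniformly as $\mf_{\theta_s}^{-\gamma}(q)=q^{\hot(\gamma)-1}\bigl((q-1)\mf_{\theta_s}^0(q)+q^{\hot(\theta_s)}\bigr)$; note that the second factor does not depend on the choice of simple short root.  The common factor pulls out, and a second application of the string decomposition evaluates the residual sum as $\sum_{\gamma\in\Delta_s^+}q^{\hot(\gamma)-1}=\sum_{i=1}^l(1+q+\dots+q^{e_i-1})=(l-\mf_{\theta_s}^0(q))/(1-q)$, so
\[
   \eus S_-=\bigl((q-1)\mf_{\theta_s}^0(q)+q^{\hot(\theta_s)}\bigr)\cdot\frac{l-\mf_{\theta_s}^0(q)}{1-q}.
\]

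It then remains to assemble $\eus S_++l\,\mf_{\theta_s}^0(q)+\eus S_-$.  I expect the main obstacle to be purely algebraic bookkeeping: expanding $\eus S_-$ yields the clean square $\mf_{\theta_s}^0(q)^2$ together with a linear term $-l\,\mf_{\theta_s}^0(q)$ that cancels the zero-weight contribution, while the two surviving rational pieces fuse by the telescoping identity
\[
   \mf_{\theta_s}^0(q)\bigl(q^{-(h-\hot(\theta_s))}-q^{\hot(\theta_s)}\bigr)=\frac{\mf_{\theta_s}^0(q)}{q^{h-\hot(\theta_s)}}\bigl(1-q^h\bigr),
\]
which produces precisely the claimed second summand.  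The unweighted form in the displayed equation before \eqref{eq:sum-all-little} then follows by subtracting the extra $(l-1)\mf_{\theta_s}^0(q)$ that accounts for the zero weight having multiplicity $l$ rather than $1$.
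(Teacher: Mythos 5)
Your proposal is correct and mirrors the paper's proof: the paper likewise partitions $\Delta_s^+$ into $l$ strings of heights $1,\dots,e_i$ via the dual-partition property, applies Theorem~\ref{thm:all-q-little} to evaluate $\eus S_+$ and $\eus S_-$, and uses the symmetry $e_i+e_{l+1-i}=h$ to arrive at the same two closed forms you found before assembling the total. The final algebraic simplification you outline is exactly what the paper leaves implicit.
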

\begin{proof}
Our argument is similar to that of Theorem~\ref{thm:sum-all-adj}.  
Since $(e_l,e_{l-1},\dots,e_1)$ and $(n_{1,s},n_{2,s},\dots)$ are dual partitions,
we present $\Delta^+_s$ as a union of $l$ 
strings of roots, where the $i$-th string consists of roots of height $1,2,\dots,e_i$.
Then, using the fact that $e_j+e_{l-j+1}=h$ for all $j$,
one computes that the sums of $q$-analogues of weight multiplicities over $\Delta_s^+$ and $-\Delta_s^+$ are equal to 
\ $\displaystyle \frac{\mf_{\theta_s}^0(q)-l q^h}{q^{h-\hot(\theta_s)}(1-q)}$ \ 
and \ $\displaystyle ((q-1)\mf_{\theta_s}^0(q)+q^{\hot(\theta_s)})\frac{l-\mf_{\theta_s}^0(q)}{(1-q)}$, \  respectively.
\end{proof}

\begin{rmk}   \label{rem:polinom}
It follows from this theorem that $\frac{\mf_{\theta_s}^{0}(q)}{q^{h-\hot(\theta_s)}}$ is a polynomial in $q$.
\end{rmk}

\begin{rmk}   \label{rem:sovpad}
In the simply-laced case, we have $l=r$, $\theta=\theta_s$, and $h-\hot(\theta_s)=1$. Then Theorems~\ref{thm:sum-all-adj} and \ref{thm:sum-all-little} yield the same formulae.
\end{rmk}

\begin{rmk}   \label{rem:subreg-orb}
Recall that the singular locus $\N^{sg}$ of $\N$ is irreducible (and the dense $G$-orbit in
$\N^{sg}$ is said to be subregular.) For any $\ap\in\Pi_s$ and $\lb\in Q\cap\PT$, Broer proves that
the collapsing $G\times_{P_\ap}\n_\ap\to \N^{sg}$, where $\n_\ap$ is 
the nilradical of $\Lie P_\ap$, is birational and
$\mf_\lb^0(q)- q^{\hot(\ap^+)}\mf_\lb^{\ap^+}(q)$ is the Poincar\'e polynomial counting the occurrences
of $\VV_\lb^*$ in the graded ring $\bbk[\N^{sg}]$, i.e.,
\[
   \mf_\lb^0(q)- q^{\hot(\ap^+)}\mf_\lb^{\ap^+}(q)=\sum_i \dim_\bbk \mathsf{Hom}_G(\VV_\lb^*,
   \bbk[\N^{sg}]_i)\, q^i 
\]
\cite[Cor.\,4.7]{br93}. In particular, $m_\lb^0-m_\lb^\ap$ is the multiplicity of $\VV_\lb^*$ in 
$\bbk[\N^{sg}]$. Using the Induction Lemma, we can prove that
$q\,\mf_\lb^\ap(q)=q^{\hot(\ap^+)}\mf_\lb^{\ap^+}(q)$. Therefore, this Poincar\'e polynomial is also equal 
to $\mf_\lb^0(q)-q\,\mf_\lb^\ap(q)=q\,\mf_\lb^0(q)-\mf_\lb^{-\ap}(q)$.
\\ \indent
For the long simple root $\ap$, the collapsing $G\times_{P_\ap}\n_\ap\to \N^{sg}$
is not birational and the ring $\bbk[\N^{sg}]$ should be replaced with 
$\bbk[G\times_{P_\ap}\n_\ap]$.
\end{rmk}

\section{A weighted sum of $q$-analogues of all weight multiplicities}
\label{sect:summa}

\noindent
For a (possibly reducible) $G$-module $V=\sum_j a_j\VV_{\lb_j}$, we set
$\mf_{V}^0(q)=\sum_j a_j \mf_{\lb_j}^0(q)$. In~\cite{rkg87-A}, R.~Gupta~(Brylinski) considered a
$\BZ[q]$-valued symmetric bilinear form $ \langle\!\langle\ ,\ \rangle\!\rangle$
on the character ring of $\g$:
\[
    \langle\!\langle \cha(V_1),\cha(V_2)\rangle\!\rangle=\mf_{V_1\otimes V_2^*}^0(q) .
\]
She proved that this form has a nice expression via the $q$-analogues of {\sl dominant\/} weights 
occurring in both $V_1$ and $V_2$. For any $\nu\in\PT$, consider the stabiliser $W_\nu\subset W$ 
and the restriction of the length function $\ell$ to $W_\nu$. Set 
$t_\nu(q)=\sum_{w\in W_\nu} q^{\ell(w)}$, the Poincar\'e polynomial of $W_\nu$. In particular,
$t_0(q)$ is the Poincar\'e polynomial of $W$.

\begin{thm}[{\cite[Cor.\,2.4]{rkg87-A}}]  \label{thm:rkb-a}
For all $\lb,\gamma\in \PT$, one has
\[
  \langle\!\langle \cha(\VV_\lb),\cha(\VV_{\gamma})\rangle\!\rangle=\sum_{\nu\in\PT}
  \mf_{\lb}^\nu(q)\mf_{\gamma}^\nu(q)\frac{t_0(q)}{t_\nu(q)} .
\]
\end{thm}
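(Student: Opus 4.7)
The plan is to combine Theorem~\ref{thm:main} with the theory of Hall--Littlewood polynomials (Kato's theorem and Macdonald's orthogonality), linked by a symmetrization identity for $\xi_q$.

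First, extending Theorem~\ref{thm:main} $\BZ$-linearly to the virtual module $\VV_\lb\otimes \VV_\gamma^* = \bigoplus_j a_j \VV_{\lb_j}$ rewrites the Gupta--Brylinski form as
\[
\langle\!\langle\chi_\lb,\chi_\gamma\rangle\!\rangle \;=\; \mf^{\,0}_{\VV_\lb\otimes \VV_\gamma^*}(q) \;=\; \sum_\mu m_\gamma^\mu\mf_\lb^\mu(q) \;=\; [e^0]\!\left(\chi_\lb\chi_\gamma^*\,\xi_q\right),
\]
where the last equality follows from \eqref{eq:series-Lqa} upon multiplying by $\chi_\gamma^*$ and extracting the $e^0$ coefficient.

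Second, I would invoke the symmetrization identity $\sum_{w\in W} w(\xi_q) = t_0(q)\cdot \Delta_q$ with $\Delta_q := \xi_q\bar\xi_q$, an instance of Macdonald's constant-term identities (provable by organizing Weyl group elements according to their inversion sets and invoking $\sum_w q^{\ell(w)}=t_0(q)$). Since $\chi_\lb\chi_\gamma^*$ is $W$-invariant and $[e^0](\sum_w w(A)) = |W|\,[e^0](A)$, this converts the ``half-kernel'' integral to a fully symmetric one:
\[
\langle\!\langle\chi_\lb,\chi_\gamma\rangle\!\rangle \;=\; \frac{t_0(q)}{|W|}\,[e^0](\chi_\lb\chi_\gamma^*\,\Delta_q) \;=\; t_0(q)\cdot\langle\chi_\lb,\chi_\gamma\rangle_q^{\mathrm{HL}},
\]
where $\langle f,g\rangle_q^{\mathrm{HL}} := \tfrac{1}{|W|}[e^0](f\,\bar g\,\Delta_q)$ is the Macdonald--Hall--Littlewood inner product on $\BZ[q][\XT]^W$.

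Finally, Kato's theorem~\cite{kato} expresses characters in the Hall--Littlewood basis $\{P_\nu:\nu\in\PT\}$ with Kostka--Foulkes transition coefficients, $\chi_\lb = \sum_{\nu\in\PT}\mf_\lb^\nu(q)\,P_\nu$, and similarly for $\chi_\gamma$. Macdonald's orthogonality $\langle P_\mu,P_\nu\rangle_q^{\mathrm{HL}} = \delta_{\mu\nu}/t_\nu(q)$, together with bilinearity, gives
\[
\langle\chi_\lb,\chi_\gamma\rangle_q^{\mathrm{HL}} \;=\; \sum_{\nu\in\PT}\frac{\mf_\lb^\nu(q)\,\mf_\gamma^\nu(q)}{t_\nu(q)},
\]
and multiplying by $t_0(q)$ recovers the right-hand side of the claim. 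The main obstacle is the symmetrization identity of the second step --- the algebraic heart of the argument --- which is what bridges the ``half-kernel'' formula for $\langle\!\langle\cdot,\cdot\rangle\!\rangle$ and the fully $W$-symmetric Macdonald inner product needed to apply Hall--Littlewood orthogonality; tracking the normalizations so that the Poincar\'e factor $t_0(q)/t_\nu(q)$ emerges cleanly is the subtle point.
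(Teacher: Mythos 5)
The paper itself offers no proof of this theorem: it is stated as a citation to Gupta's \cite[Cor.~2.4]{rkg87-A} and is then \emph{used} in the proof of Theorem~\ref{thm:main} (the last equality there is attributed to Brylinski). So there is nothing in the paper to compare your argument against; I can only judge it on its own terms.

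Your outline is essentially correct, and it is in fact the route taken in Gupta's original paper (whose title, ``Generalized exponents via Hall--Littlewood symmetric functions,'' already announces it). Three remarks. First, a point of logical hygiene: you invoke Theorem~\ref{thm:main}, but that theorem is stated and proved \emph{after} the present one and its proof cites the present one for the last equality in the chain. No actual circularity results, because the equality you need, $\mf^0_{\VV_\lb\otimes\VV_\gamma^*}(q)=\sum_\mu m_\gamma^\mu\mf_\lb^\mu(q)$, is precisely the part of Theorem~\ref{thm:main} proved independently of the present theorem --- and in any case follows directly from \eqref{eq:series-Lqa} by multiplying by $\chi_\gamma^*$ and extracting $[e^0]$, which is how you actually derive it. You should say this explicitly rather than cite Theorem~\ref{thm:main} as a black box. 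Second, your ``symmetrization identity'' $\sum_{w\in W}w(\xi_q)=t_0(q)\,\xi_q\bar\xi_q$ is not quite a constant-term identity; dividing both sides by the $W$-invariant quantity $\xi_q\bar\xi_q$ shows it is equivalent to Macdonald's closed formula for the Poincar\'e series, $\sum_{w\in W} w\!\left(\prod_{\alpha\in\Delta^+}\tfrac{1-qe^\alpha}{1-e^\alpha}\right)=t_0(q)$ --- a full character identity, not just its $e^0$-coefficient. This is standard and is indeed the engine of the argument. Third, a small book-keeping point you gloss over: the bar involution sends $P_\nu$ to $P_{\nu^*}$, so the orthogonality relation in the form you need is $\tfrac{1}{|W|}[e^0](P_\mu P_\nu\Delta_q)=\delta_{\mu,\nu^*}/t_\nu(q)$; combined with $\mf_{\gamma^*}^{\mu}=\mf_\gamma^{\mu^*}$ and $t_\nu=t_{\nu^*}$, the duals cancel and the stated formula emerges. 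With these clarifications the argument is complete and is the ``standard'' proof, orthogonal to (and a prerequisite for) the Euler-characteristic computation that the paper actually carries out in Theorem~\ref{thm:main}.
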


\noindent
We provide here another formula for this bilinear form that involves the usual weight 
multiplicities for one representation and $q$-analogues of {\sl all\/} weight multiplicities for the other representation.
Below, we write $\mf_{\lb^*\otimes\gamma}^0(q)$ in place of
$\mf_{\VV_{\lb^*}\otimes\VV_{\gamma}}^0(q)$.

\begin{thm}  \label{thm:main} 
For all $\lb,\gamma\in\PT$, we have
\[
    \sum_{\mu\dashv\VV_\gamma} m_\gamma^\mu\mullq=\sum_{\mu\dashv\VV_\lb} m_\lb^\mu\mf_\gamma^\mu(q)=\mf_{\lb^*\otimes\gamma}^0(q)=
    \sum_{\nu\in\PT}\mf_\gamma^\nu(q)\mf_\lb^\nu(q){\cdot}\frac{t_0(q)}{t_\nu(q)} .
\]
\end{thm}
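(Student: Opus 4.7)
The plan is to compute the coefficient of $e^0$ in the formal series $\chi_{\gamma^*}\chi_\lb\xi_q$ in two different ways, using Eq.~\eqref{eq:series-Lqa} at each stage, and then to invoke Theorem~\ref{thm:rkb-a} in order to match the two resulting answers with the rightmost sum in the statement.

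First, I would note that \eqref{eq:series-Lqa} is linear in $\lb$ and hence extends to the identity $\chi_V\xi_q=\sum_\mu\mf_V^\mu(q)\,e^\mu$ for any (possibly reducible) $G$-module $V$; in particular, $\mf_V^0(q)=[e^0](\chi_V\xi_q)$. Applied to $V=\VV_\gamma^*\otimes\VV_\lb$ this gives $\mf_{\gamma^*\otimes\lb}^0(q)=[e^0](\chi_{\gamma^*}\chi_\lb\xi_q)$. Alternatively, I would use \eqref{eq:series-Lqa} for the simple module $\VV_\lb$ to expand $\chi_\lb\xi_q=\sum_\mu\mullq e^\mu$, multiply by $\chi_{\gamma^*}$, and extract the $e^0$-coefficient; this produces $\sum_\mu m_{\gamma^*}^{-\mu}\mullq=\sum_{\mu\dashv\VV_\gamma}m_\gamma^\mu\mullq$, where the last equality uses $m_{\gamma^*}^{-\mu}=m_\gamma^\mu$ (the weights of $\VV_\gamma^*$ are the negatives of the weights of $\VV_\gamma$). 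Equating the two computations yields
\[
    \sum_{\mu\dashv\VV_\gamma}m_\gamma^\mu\mullq=\mf_{\gamma^*\otimes\lb}^0(q),
\]
and swapping $\lb\leftrightarrow\gamma$ in the same argument delivers the companion identity $\sum_{\mu\dashv\VV_\lb}m_\lb^\mu\mf_\gamma^\mu(q)=\mf_{\lb^*\otimes\gamma}^0(q)$.

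To conclude, I would argue that these two right-hand sides agree with each other and with $\sum_{\nu\in\PT}\mf_\gamma^\nu(q)\mf_\lb^\nu(q)\,t_0(q)/t_\nu(q)$. Both facts follow from Theorem~\ref{thm:rkb-a}: by commutativity of tensor product one has $\mf_{\gamma^*\otimes\lb}^0(q)=\mf_{\lb\otimes\gamma^*}^0(q)=\langle\!\langle\chi_\lb,\chi_\gamma\rangle\!\rangle$, and the explicit formula in Theorem~\ref{thm:rkb-a} is manifestly symmetric in $\lb$ and $\gamma$, so it is also equal to $\langle\!\langle\chi_\gamma,\chi_\lb\rangle\!\rangle=\mf_{\lb^*\otimes\gamma}^0(q)$. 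I do not foresee any real obstacle; the argument amounts to a direct manipulation of the character identity \eqref{eq:series-Lqa} combined with a single appeal to Theorem~\ref{thm:rkb-a}. The only care required is bookkeeping---tracking the dualisation via $m_{\gamma^*}^{-\mu}=m_\gamma^\mu$ and noting that $\mf_{(\cdot)}^0(q)$ is additive in the Grothendieck group, which is what legitimises applying \eqref{eq:series-Lqa} to the reducible module $\VV_\gamma^*\otimes\VV_\lb$.
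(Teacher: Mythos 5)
Your proof is correct, and it takes a genuinely different route from the paper's. The paper establishes the central equality $\sum_{\mu\dashv\VV_\lb} m_\lb^\mu\mf_\gamma^\mu(q)=\mf_{\lb^*\otimes\gamma}^0(q)$ by computing the graded Euler characteristic $\eul(\VV_\lb)=\sum_i(-1)^i\cha_q\bigl(H^i(\ct,\mathcal L_\ct(\VV_\lb)^\star)\bigr)$ in two ways: once by filtering $\VV_\lb$ as a $B$-module into one-dimensional pieces and reducing to line bundles via Theorem~\ref{thm:ranee-svyaz}, and once by observing that, $\VV_\lb$ being a $G$-module, the vector bundle $G\times_B(\ut\oplus\VV_\lb^*)\to\ct$ is trivial, so that $\eul(\VV_\lb)=\chi_\lb^*\cdot\eul(\bbk_0)$; equating coefficients of $\chi_\gamma^*$ then produces the identity, after an appeal to the tensor-multiplicity symmetry $c_{\lb^*\nu^*}^{\gamma^*}=c_{\lb^*\gamma}^\nu$. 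You bypass the geometry entirely: you extract the $e^0$-coefficient of $\chi_{\gamma^*}\chi_\lb\,\xi_q$ in two ways directly from the generating-function identity \eqref{eq:series-Lqa}, once by grouping $\chi_{\gamma^*}\chi_\lb=\chi_{\gamma^*\otimes\lb}$ and once by first expanding $\chi_\lb\xi_q$. The two computations are formally parallel --- the bundle trivialisation in the paper corresponds to the multiplicativity of characters in your argument, and the $B$-filtration of $\VV_\lb$ corresponds to the naive coefficient bookkeeping --- and both proofs ultimately rely on the same pair of external facts (the additivity of $\mf_{(\cdot)}^0$ on the Grothendieck group, and Gupta's Theorem~\ref{thm:rkb-a} for the rightmost expression and for the symmetry $\mf_{\gamma^*\otimes\lb}^0=\mf_{\lb^*\otimes\gamma}^0$). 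What your route buys is a cohomology-free, self-contained character-level derivation, at the modest cost of not exhibiting the geometric picture (line bundles on $\ct$, Euler characteristics) that motivates the rest of the paper. One small point to make explicit if you write this out: the extension of \eqref{eq:series-Lqa} to reducible $V$ is by linearity in $\chi_\lb$, and the definition $\mf_V^\mu(q):=\sum_j a_j\mf_{\lb_j}^\mu(q)$ should be stated, since the paper only defines $\mf_V^0(q)$ for reducible $V$.
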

\begin{proof}
The last equality here is the above-mentioned result of R.~Brylinski; the first equality stems from the symmetry of the last expression with respect to $\lb$ and $\gamma$. Hence our task is to prove the
second equality.
Consider the vector bundle $G\times_B(\ut\oplus \VV_\lb^*)\to \ct$ and the corresponding sheaf 
$\mathcal L_\ct(\VV_\lb^*)$ of  graded $\bbk[\N]$-modules.
As in case of line bundles on $\ct$ (see Section~\ref{sect:recoll-q-an}), the graded character of $H^i(\ct,\mathcal L_\ct(\VV_\lb)^\star)$ is well-defined and 
we say that 
\[
    \eul(\VV_\lb)=\sum_i (-1)^i\cha_q\bigl(H^i(\ct,\mathcal L_\ct(\VV_\lb)^\star)\bigr)
\]
is the  {\it graded Euler  characteristic} (of $\mathcal L_\ct(\VV_\lb)^\star$). Let us compute 
$\eul(\VV_\lb)$ in two different ways.

First, we can replace $\VV_\lb$ with the completely reducible $B$-module
$\widetilde\VV_\lb=\oplus_\mu m_\lb^\mu \,\bbk_\mu$, which does not change the graded Euler 
characteristic. Then 
\beq    \label{eq:eul-1}
  \eul(\VV_\lb)=\eul(\widetilde\VV_\lb)=\sum_{\mu\dashv \VV_\lb}m_\lb^\mu\,\eul(\bbk_\mu)=
  \sum_{\mu\dashv \VV_\lb}\sum_{\nu\in\PT}m_\lb^\mu \mf_\nu^\mu(q)\chi_\nu^* ,
\eeq
where the last equality follows by Theorem~\ref{thm:ranee-svyaz}.
 
On the other hand, $\VV_\lb$ is a $G$-module, therefore $G\times_B(\ut\oplus \VV_\lb^*)\simeq
\ct\times \VV_\lb^*$ and
\beq    \label{eq:eul-2}
  \eul(\VV_\lb)\simeq  \cha(\VV_\lb^*) \cdot \eul(\bbk_0)=
  \chi_\lb^*{\cdot}\sum_{\nu\in\PT} \mf_\nu^0(q)\chi_\nu^* .
\eeq
Now, equating the coefficients of $\chi_\gamma^*$ in \eqref{eq:eul-1} and \eqref{eq:eul-2}, we will 
obtain the assertion. The required coefficient in \eqref{eq:eul-1} equals 
$\sum_{\mu\dashv \VV_\lb}m_\lb^\mu \mf_\gamma^\mu(q)$. Expanding the product
$\chi_\lb^*\chi_\nu^*=\sum_{\kappa\in\PT} c_{\lb^*\nu^*}^{\kappa}
\chi_\kappa$, we see that the coefficient of 
$\chi_\gamma^*$ in \eqref{eq:eul-2} equals $\sum_{\nu\in\PT} c_{\lb^*\nu^*}^{\gamma^*}\mf_\nu^0(q)$.
Since $c_{\lb^*\nu^*}^{\gamma^*}=c_{\lb^*\gamma}^\nu$, this sum also equals
$\mf_{\lb^*\otimes\gamma}^0(q)$. 
\end{proof}

\begin{cl}    \label{cor:lambda=gamma}
For any $\lb\in\PT$, we have
$\displaystyle
  \sum_{\mu\dashv\VV_\lb} m_\lb^\mu\mf_\lb^\mu(q)=\sum_{\nu\in\PT}\mf_\lb^\nu(q)^2{\cdot}\frac{t_0(q)}{t_\nu(q)}$.
\end{cl}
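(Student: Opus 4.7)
The statement is an immediate specialisation of Theorem~\ref{thm:main}, so the proof proposal is essentially a one-line substitution together with a brief check that the right-hand side of the chain of equalities in Theorem~\ref{thm:main} collapses to the asserted expression.

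My plan is to apply Theorem~\ref{thm:main} with $\gamma=\lb$. The middle term then reads $\sum_{\mu\dashv\VV_\lb} m_\lb^\mu \mf_\lb^\mu(q)$, which is exactly the left-hand side of the corollary. The rightmost term becomes
\[
   \sum_{\nu\in\PT}\mf_\lb^\nu(q)\,\mf_\lb^\nu(q)\cdot\frac{t_0(q)}{t_\nu(q)}=\sum_{\nu\in\PT}\mf_\lb^\nu(q)^2\cdot\frac{t_0(q)}{t_\nu(q)},
\]
which is the right-hand side of the corollary. Equality of these two quantities is precisely what Theorem~\ref{thm:main} asserts under the substitution $\gamma=\lb$.

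There is no obstacle; the whole content of the corollary is already in Theorem~\ref{thm:main}, and no further verification is required beyond noting that $\lb^*\otimes\lb$ contains the trivial representation (so $\mf_{\lb^*\otimes\lb}^0(q)$ makes sense and equals both sides via the two formulae of Theorem~\ref{thm:main}). In particular, one could equivalently phrase the corollary as the identity $\mf_{\lb^*\otimes\lb}^0(q)=\sum_{\nu\in\PT}\mf_\lb^\nu(q)^2\,t_0(q)/t_\nu(q)$, which is a Hermitian-type instance of R.~Brylinski's bilinear form evaluated on $\cha(\VV_\lb)$ with itself.
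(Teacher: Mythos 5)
Your proposal is correct and is exactly the intended argument: Corollary~\ref{cor:lambda=gamma} is the special case $\gamma=\lb$ of Theorem~\ref{thm:main}, and the paper gives no separate proof precisely because this substitution is immediate. The closing remark about $\mf_{\lb^*\otimes\lb}^0(q)$ being the Brylinski form $\langle\!\langle\cha(\VV_\lb),\cha(\VV_\lb)\rangle\!\rangle$ is accurate and harmless, though not strictly needed.
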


\noindent (Note that for $\nu\in\PT$, $\mf_\lb^\nu(q)$ is nonzero if and only if $\nu\dashv\VV_\lb$.)
This equality shows that the weighted sum 
$\sum_{\mu\dashv\VV_\lb} m_\lb^\mu\mf_\lb^\mu(q)$ is a more natural object  than just 
$\sum_{\mu\dashv\VV_\lb} \mf_\lb^\mu(q)$. Actually, we do not know any closed expression for the 
latter. Moreover, the weighted sum of $q$-analogues of all weight multiplicities is a polynomial with 
nonnegative coefficients, whereas this is not always the case for the plain sum
(use Theorem~\ref{thm:sum-all-adj} and look at the adjoint representation of $\mathfrak{sl}_{r+1}$ with $r\ge 4$).

\begin{rmk}   \label{rem:tempting}   
It was tempting to conjecture that Corollary~\ref{cor:lambda=gamma} could be refined so that one 
takes the sum over a sole Weyl group orbit $W\mu$ in the LHS and pick the summand corresponding 
to $\mu^+$ in the RHS. But this doesn't work! For instance, if $\mu=0\dashv\VV_\lb$, then the 
corresponding summands are $m_\lb^0\mf_\lb^0(q)$ (left) and $\mf_\lb^0(q)^2$ (right).
\end{rmk}

\begin{cl}   \label{cor:minuscule}
1) If\/ $\lb\in\PT$ is minuscule, then $\displaystyle\sum_{\mu\dashv\VV_\lb} \mf_\lb^\mu(q)=
\sum_{\mu\dashv\VV_\lb} q^{\hot(\lb-\mu)}=\frac{t_0(q)}{t_\lb(q)}$; \\
2) More generally, if\/ $\VV_\lb$ is weight multiplicity free (i.e., $m_\lb^\mu=1$ for all $\mu\dashv \VV_\lb$),
then \\ 
\centerline{$\displaystyle\sum_{\mu\dashv\VV_\lb} \mf_\lb^\mu(q)=
\sum_{\mu\dashv\VV_\lb} q^{\hot(\lb-\mu)}$.}
\end{cl}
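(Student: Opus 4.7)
The plan is to reduce both parts of the corollary to a single pointwise statement and establish it by induction on $\hot(\lb-\mu)$.

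The key claim is: \emph{if $\VV_\lb$ is weight multiplicity free and $\mu\dashv\VV_\lb$, then $\mf_\lb^\mu(q)=q^{\hot(\lb-\mu)}$.} Granting this, part 2 follows immediately by summing over all weights. Part 1 follows as well: for minuscule $\lb$ the module $\VV_\lb$ is WMF and $W$ acts transitively on the set of weights, so $\sum_{\mu\dashv\VV_\lb}q^{\hot(\lb-\mu)}=\sum_{w\in W/W_\lb}q^{\hot(\lb-w\lb)}$. Since $(\mu,\ap^\vee)\in\{0,\pm 1\}$ for every weight $\mu$ and every root $\ap$ of a minuscule representation, a reduced expression of the minimum-length coset representative $w\in W/W_\lb$ shifts $\lb$ by exactly one simple root at each step, so $\hot(\lb-w\lb)=\ell(w)$; the sum then becomes the standard length generating function $\sum_{w}q^{\ell(w)}=t_0(q)/t_\lb(q)$.

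To prove the key claim, I induct on $\hot(\lb-\mu)$. The base case $\mu=\lb$ is trivial, since $\mf_\lb^\lb(q)=1$. For the inductive step I distinguish two cases. If $\mu$ is dominant, then $(\mu,\nu^\vee)\ge 0\ge -1$ for every $\nu\in\Delta^+$, so by Broer's criterion (Theorem~\ref{broer-krit}) $\mf_\lb^\mu(q)$ has nonnegative integer coefficients; being monic of degree $\hot(\lb-\mu)$ and satisfying $\mf_\lb^\mu(1)=m_\lb^\mu=1$ by the WMF hypothesis, it must equal the single monomial $q^{\hot(\lb-\mu)}$. If $\mu$ is not dominant, pick $\ap\in\Pi$ with $(\mu,\ap^\vee)=-n<0$. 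Then the entire $\ap$-string $\mu,\mu+\ap,\ldots,s_\ap(\mu)=\mu+n\ap$ lies in $\VV_\lb$, and each member strictly above $\mu$ has smaller value of $\hot(\lb-\cdot)$, so the inductive hypothesis applies to them. Substituting $\gamma=\mu$ into the Induction Lemma (Theorem~\ref{induction-lemma}) and plugging in the values $q^{\hot(\lb-\mu)-1}$, $q^{\hot(\lb-\mu)-n+1}$, $q^{\hot(\lb-\mu)-n}$ for the three already known terms, a one-line cancellation delivers $\mf_\lb^\mu(q)=q^{\hot(\lb-\mu)}$.

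The main obstacle is the interior-dominant-weight case of the induction: it does occur for non-minuscule WMF modules, e.g.\ at the zero weight of the seven-dimensional representation of $G_2$, where the Induction Lemma provides no direct reduction to higher-height weights. Broer's cohomological nonnegativity criterion is precisely the right tool there, since together with monicity and the specialisation $\mf_\lb^\mu(1)=1$ it squeezes $\mf_\lb^\mu(q)$ into a single monomial.
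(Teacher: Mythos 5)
Your proof is correct, and your unified key claim — for weight multiplicity free $\VV_\lb$ and $\mu\dashv\VV_\lb$ one has $\mf_\lb^\mu(q)=q^{\hot(\lb-\mu)}$ — is the right reduction. The paper's treatment of part~1 is a little different in flavour: for minuscule $\lb$ \emph{every} weight $\mu$ (not only the dominant ones) satisfies condition~(3) of Theorem~\ref{broer-krit}, since $(\mu,\nu^\vee)\in\{0,\pm1\}$ for all $\nu\in\Delta^+$; so the paper gets $\mf_\lb^\mu(q)=q^{\hot(\lb-\mu)}$ uniformly from Broer's criterion alone, never touching Theorem~\ref{induction-lemma}. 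For part~2 the paper merely says ``one easily proves by induction using Theorem~\ref{induction-lemma},'' which is your case (b); you are right that this is not the whole story, since at a dominant weight strictly below $\lb$ (e.g.\ $\mu=0$ for the seven-dimensional $\GR{G}{2}$-module) no simple root pairs negatively with $\mu$, the Induction Lemma gives nothing, and one \emph{must} invoke Broer's criterion — your proof makes this explicit where the paper glosses over it. Finally, for the identity $\sum_\mu q^{\hot(\lb-\mu)}=t_0(q)/t_\lb(q)$ you give a direct Bruhat-order argument (each cover among minimal coset representatives shifts $w\lb$ by exactly one simple root, so $\hot(\lb-w\lb)=\ell(w)$), whereas the paper can simply specialise Corollary~\ref{cor:lambda=gamma}: with all $m_\lb^\mu=1$ and $\lb$ the unique dominant weight, that identity reads $\sum_\mu\mf_\lb^\mu(q)=\mf_\lb^\lb(q)^2\,t_0(q)/t_\lb(q)=t_0(q)/t_\lb(q)$. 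Both routes are fine; yours is elementary and self-contained, the paper's leans on the machinery already in place.
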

\begin{proof}
1) In this case all weight multiplicities are equal to one  and $\lb$ is the only dominant weight of $\VV_\lb$.
Moreover, all the weights $\mu$ satisfy the condition (3) of Theorem~\ref{broer-krit} and therefore
$\mf_\lb^\mu(q)=q^{\hot(\lb-\mu)}$.

2) Since $m_\lb^\mu=1$ for all $\mu$, 
using Theorem~\ref{induction-lemma},  one easily proves by induction on $\hot(\lb-\mu)$ that $\mf_\lb^\mu(q)=q^{\hot(\lb-\mu)}$.
\end{proof}

This corollary shows that, for the weight multiplicity free case,  $
\sum_{\mu\dashv\VV_\lb} \mf_\lb^\mu(q)$
equals the {\it Dynkin polynomial} of $\VV_\lb$, see \cite[Sect.\,3]{endo04}.

\begin{cl}    \label{cor:tozhdestvo-theta}
(i) If\/ $\g$ is simply-laced, then \ 
$\displaystyle \frac{t_0(q)}{t_\theta(q)}=\frac{\mf_\theta^0(q)}{q}{\cdot}\frac{1-q^h}{1-q}$.
\\
(ii) \  
More generally, for any simple Lie algebra $\g$, we have
\ $\displaystyle \frac{t_0(q)}{t_{\theta_s}(q)}=\frac{\mf_{\theta_s}^0(q)}{q^{h-\hot(\theta_s)}}{\cdot}\frac{1-q^h}{1-q}$ \ and \\
$\displaystyle \frac{t_0(q)}{t_\theta(q)}=
\frac{\mf_{\theta^\vee}^0(q)}{q^{h-\hot(\theta^\vee)}}{\cdot}\frac{1-q^h}{1-q}$, where $\theta^\vee$ is 
regarded as the short dominant root in  $\Delta^\vee$.
\end{cl}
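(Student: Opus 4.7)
The plan is to evaluate the weighted sum $\sum_{\mu\dashv\VV_\lb}m_\lb^\mu\,\mf_\lb^\mu(q)$ in two different ways, for $\lb=\theta$ in part~(i) and $\lb=\theta_s$ in part~(ii). On one hand, Theorem~\ref{thm:main} with $\gamma=\lb$ yields
\[
  \sum_{\mu\dashv\VV_\lb}m_\lb^\mu\,\mf_\lb^\mu(q)=\sum_{\nu\in\PT}\mf_\lb^\nu(q)^2\cdot\frac{t_0(q)}{t_\nu(q)}.
\]
On the other, Theorems~\ref{thm:sum-all-adj} and~\ref{thm:sum-all-little} already evaluate the left-hand side in closed form. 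I would argue that the right-hand side collapses to a two-term sum and then solve for $t_0(q)/t_\lb(q)$.

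For the collapse: the weights of $\g=\VV_\theta$ form $\Delta\cup\{0\}$ and those of $\vts$ form $\Delta_s\cup\{0\}$; since $W$ acts transitively on roots of a given length, the only dominant weights of $\VV_\theta$ (in the simply-laced case) and of $\vts$ (in general) are $\lb$ and $0$. For any other $\nu\in\PT$, one has $m_\lb^\nu=\mf_\lb^\nu(1)=0$, and because $\mf_\lb^\nu(q)$ has nonnegative coefficients whenever $\nu$ is dominant, this forces $\mf_\lb^\nu(q)\equiv 0$. The right-hand side therefore reduces to
\[
   \mf_\lb^0(q)^2+\mf_\lb^\lb(q)^2\cdot\frac{t_0(q)}{t_\lb(q)}=\mf_\lb^0(q)^2+\frac{t_0(q)}{t_\lb(q)},
\]
using $\mf_\lb^\lb(q)=1$. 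Equating this with the expression supplied by Theorem~\ref{thm:sum-all-adj} when $\lb=\theta$ (respectively, Theorem~\ref{thm:sum-all-little} when $\lb=\theta_s$) and cancelling the common summand $\mf_\lb^0(q)^2$ yields the first formula of~(i) and~(ii).

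The second formula of~(ii) follows by duality. The coroot system $\Delta^\vee$ has the same Weyl group $W$ and the same Coxeter number $h$ as $\Delta$; its short dominant root is the coroot $\theta^\vee$ of the long dominant root $\theta$ of $\Delta$. Because $\theta^\vee$ is a positive scalar multiple of $\theta$ in the ambient Euclidean space, the pointwise stabilisers agree, $W_{\theta^\vee}=W_\theta$, and hence $t_{\theta^\vee}(q)=t_\theta(q)$. Applying the first formula of~(ii) to $\Delta^\vee$, with $\mf_{\theta^\vee}^0(q)$ and $\hot(\theta^\vee)$ interpreted inside the dual root system, produces the desired identity. No serious obstacle is anticipated; the only delicate step is the collapse of the sum over $\PT$, where one invokes the nonnegativity of the coefficients of $\mf_\lb^\nu(q)$ for dominant $\nu$ (via the Kazhdan--Lusztig interpretation) in order to promote the vanishing of $m_\lb^\nu$ to the vanishing of the polynomial itself.
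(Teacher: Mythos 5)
Your proof is correct and follows essentially the same route as the paper: combine Corollary~\ref{cor:lambda=gamma} with the closed-form evaluation of $\sum_{\mu}m_\lb^\mu\mf_\lb^\mu(q)$ from Theorems~\ref{thm:sum-all-adj}/\ref{thm:sum-all-little}, exploit the fact that the only dominant weights of $\vts$ (and of $\g$ in the simply-laced case) are $\lb$ and $0$ to collapse the right-hand sum to two terms, cancel $\mf_\lb^0(q)^2$, and deduce the second identity of (ii) by passing to $\g^\vee$ with $t_{\theta^\vee}(q)=t_\theta(q)$. The only cosmetic difference is that the paper derives (i) as a special case of (ii), noting $\theta=\theta_s$ and $h-\hot(\theta_s)=1$ in the simply-laced case (cf.~Remark~\ref{rem:sovpad}), whereas you re-run the argument with Theorem~\ref{thm:sum-all-adj} directly; both are fine.
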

\begin{proof} \ 
In the simply laced case, $\theta=\theta_s$ and $\hot(\theta_s)=\hot(\theta^\vee)=h-1$. Therefore, it suffices to prove part (ii). For the first equality in (ii), we
combine Eq.~\eqref{eq:sum-all-little} and Corollary~\ref{cor:lambda=gamma} with $\lb=\theta_s$, and also use the fact that the only dominant weights of $\vts$ are $\theta_s$ and $0$. The second equality stems from the similar argument for the dual Lie algebra $\g^\vee$ and the fact that 
$t_\theta(q)=t_{\theta^\vee}(q)$.
\end{proof}

\begin{rmk}   \label{rem:obsuzhd}
The last corollary can be verified by a direct calculation.
Recall that if $d_1(\nu),\dots, d_r(\nu)$ are the degrees of basic invariants of the reflection group $W_\nu\subset GL(\te)$, then
$t_\nu(q)=\displaystyle\prod_{i=1}^r \frac{1-q^{d_i(\nu)}}{1-q}$. In particular, $d_i(0)=m_i+1$. 
It is a kind of miracle that 
the complicated fraction \ 
$\displaystyle \frac{t_0(q)}{t_{\theta_s}(q)}=\prod_{i=1}^r \frac{1-q^{d_i(0)}}{1-q^{d_i(\theta_s)}}$ 
simplifies to rather a simple expression!
\end{rmk}

\section{Generalised exponents and the height of weights}    
\label{sect:jump-and-general}

\noindent In Section~\ref{sect:adj+little}, we used the fact that the generalised exponents of 
$\g=\VV_\theta$ and $\vts$ are determined via the height of `positive weights (roots)'. Here we provide 
a geometric condition for this phenomenon and point out some other irreducible representation having 
the similar property. This relies on results of R.\,Brylinski on the principal filtration of a weight space
and `jump' polynomials~\cite{rkb89}.

Let $e$ be a principal nilpotent element of $\g$ and $\{e,\tilde h,f\}$ a corresponding principal 
$\tri$-triple in $\g$. Without loss of generality, we assume that $e$ is the sum of root vectors 
corresponding to  $\Pi$ and hence
$\ap(\tilde h)=2$ for all $\ap\in\Pi$ \cite{ko59,ko63}.
This means that upon the identification of $\te$ and $\te^*$, $\frac{1}{2}\tilde h$ is nothing but 
$\rho^\vee:=\frac{1}{2}\sum_{\gamma\in\Delta^+}\gamma^\vee$ and  
$\gamma(\frac{1}{2}\tilde h)=(\gamma,\rho^\vee)=\hot(\gamma)$ for all $\gamma\in Q_+$.

Let $\es=\langle e,\tilde h,f\rangle$ be the corresponding simple subalgebra of $\g$.
We write $\sfr_n$ for the simple $\es$-module of dimension $n+1$, so that the $\tilde h$-eigenvalues
in $\sfr_n$ are $n,n-2,\dots, -n$.

In what follows, $\VV=\VV_\lb$,  $\cP(\VV)$ is the set of weights of $\VV$, and  
\[\cP(\VV)_+=\{\nu\in\cP(\VV) \mid \nu(\tilde h)>0\} .\] 
We also write $\widetilde{\cP(\VV)_+}$ for the
{\sl multiset\/} of weights in $\cP(\VV)_+$,  where each $\nu$ appears with multiplicity $m_\lb^\nu$.
It is assumed below that $\lb\in\PT\cap Q$, so that $\cP(\VV)\subset Q$ and 
$m_\lb^0\ne 0$.

\begin{thm}   \label{thm:geom-tozhd}
Suppose that $\dim \VV^\te=\dim\VV^e(=:n)$. Then 
\begin{itemize}
\item[\sf (i)] \ $\cP(\VV)=\cP(\VV)_+\cup\{0\} \cup (-\cP(\VV)_+)$; moreover, each nonzero weight is a multiple of a root.
\item[\sf (ii)] \ 
$\displaystyle \prod_{\gamma\in\widetilde{\cP(\VV)_+}}
\frac{1-q^{\hot(\gamma)+1}}{1-q^{\hot(\gamma)}}=
\prod_{i=1}^{n}\frac{1-q^{m_i(\lb)+1}}{1-q}$.
\end{itemize}

\end{thm}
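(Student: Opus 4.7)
The plan is to decompose $\VV=\VV_\lb$ under the principal $\tri$-triple $\es=\lg e,\tilde h,f\rg$ and match two different descriptions of each $\tilde h$-eigenspace. Write $\VV=\bigoplus_{j=1}^N\sfr_{d_j}$ as an $\es$-module. Each summand $\sfr_{d_j}$ contributes one dimension to $\VV^e$ (its highest weight line), and contributes one dimension to the zero $\tilde h$-eigenspace $\VV[0]$ precisely when $d_j$ is even. Since $\tilde h\in\te$ gives $\VV^\te\subseteq\VV[0]$, one has the chain
\[
\dim\VV^\te\ \le\ \dim\VV[0]=\#\{j:d_j\text{ is even}\}\ \le\ N=\dim\VV^e.
\]
The hypothesis forces equalities throughout, yielding two structural conclusions: \textbf{(a)} every $\sfr_{d_j}$ is odd-dimensional, so $N=n$ and one may write $d_j=2k_j$; and \textbf{(b)} $\VV^\te=\VV[0]$, i.e., no nonzero weight of $\VV$ has height zero (using $\rho^\vee=\tilde h/2$ and $\nu(\rho^\vee)=\hot(\nu)$ on $Q$). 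Fact (b) immediately yields the partition $\cP(\VV)=\cP(\VV)_+\cup\{0\}\cup(-\cP(\VV)_+)$ asserted in~(i).

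For~(ii) I would invoke R.\,Brylinski's principal-filtration theorem \cite{rkb89}, which identifies the multiset $\{k_1,\ldots,k_n\}$ of half-dimensions with the multiset $\{m_1(\lb),\ldots,m_n(\lb)\}$ of generalised exponents. The right-hand side of~(ii) then equals $\prod_{j=1}^n(1-q^{k_j+1})/(1-q)$. For the left-hand side, set $a_h:=\dim\VV[2h]$ for $h\ge 1$. By~(b) this equals $\sum_{\gamma\in\cP(\VV)_+,\,\hot(\gamma)=h}m_\lb^\gamma$, which is precisely the number of times the value $h$ is attained as a height in $\widetilde{\cP(\VV)_+}$; counting contributions from the $\es$-summands gives $a_h=\#\{j:k_j\ge h\}$. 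Grouping the left-hand side by height level and swapping the order of multiplication,
\[
\prod_{\gamma\in\widetilde{\cP(\VV)_+}}\frac{1-q^{\hot(\gamma)+1}}{1-q^{\hot(\gamma)}}=\prod_{h\ge 1}\left(\frac{1-q^{h+1}}{1-q^h}\right)^{a_h}=\prod_{j=1}^n\prod_{h=1}^{k_j}\frac{1-q^{h+1}}{1-q^h},
\]
and the inner product telescopes to $(1-q^{k_j+1})/(1-q)$, matching the right-hand side.

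The deep input here is Brylinski's identification of the principal $\tri$-Jordan block sizes with the generalised exponents; Kostant's classical theorem is its instance for $\VV_\lb=\g$. Once this is in hand, the computation of~(ii) reduces to the two descriptions of $a_h$ combined with the telescoping above. The main remaining delicate point is the secondary assertion of~(i) that every nonzero weight is a multiple of a root: because $f=\sum_{\ap\in\Pi}c_\ap f_\ap$ is \emph{not} a $\te$-weight vector, the iterates $f^s v_j$ a priori spread over several $\te$-weights at the same height, and I expect that handling this cleanly --- whether by an induction exploiting the one-dimensionality of each $\tilde h$-eigenspace of $\sfr_{2k_j}$, or by a case-by-case verification over the short list of $\VV_\lb$ satisfying the hypothesis --- will be the main obstacle.
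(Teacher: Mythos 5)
Your overall structure matches the paper's, and the telescoping computation in (ii) is exactly right, but two points need attention.

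The main gap is precisely the one you flagged: the assertion in (i) that every nonzero weight is a multiple of a root. You offer two possible workarounds (an induction, or a case check), but neither is carried out, and the natural route you probe --- iterating $f$ on highest-weight vectors $v_j\in\VV^e$ --- really is a dead end, since $v_j$ is only an $\tilde h$-eigenvector and its $\te$-weight components are not controlled. The paper's argument goes the other way and is short. From the $\es$-decomposition $\VV=\bigoplus\sfr_{2k_j}$, the operator $e^h$ maps $\VV[0]=\VV^\te$ onto $\VV[2h]$ for every $h\ge 0$; since $e$ is a sum of simple root vectors, applying $e$ to a $\te$-weight vector of weight $\mu$ produces weights $\mu+\ap$, $\ap\in\Pi$. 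Starting from weight $0$ this gives $\cP(\VV)_+\subset Q_+$, and symmetrically $\cP(\VV)_-\subset -Q_+$, so $\cP(\VV)\subset Q_+\cup\{0\}\cup(-Q_+)$. Now if $\gamma\in Q_+$ is not proportional to a root, there is $w\in W$ with $w\gamma\notin Q_+\cup(-Q_+)$; since $\cP(\VV)$ is $W$-stable, this forces every nonzero weight to be proportional to a root. (Incidentally, this is also what justifies the claim $\cP(\VV)_-=-\cP(\VV)_+$, which you state as following ``immediately'' from your fact (b); fact (b) only gives the disjoint union $\cP(\VV)_+\sqcup\{0\}\sqcup\cP(\VV)_-$, and you need $s_\beta$-reflections across the roots $\beta$ to identify $\cP(\VV)_-$ with $-\cP(\VV)_+$.)

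A smaller point concerns your appeal to Brylinski. Her theorem does not in general identify the generalised exponents $m_i(\lb)$ with the half-dimensions $k_j$; it identifies them with the $\tfrac12\tilde h$-eigenvalues on $\VV^{\z(e)}$, which is a priori only a subspace of $\VV^e$. What makes the two multisets coincide here is the supplementary fact $\dim\VV^{\z(e)}=\dim\VV^\te$ (valid for any simple $G$-module with a zero weight, \cite[Cor.\,2.7]{rkb89}), which combined with the hypothesis $\dim\VV^\te=\dim\VV^e$ forces $\VV^{\z(e)}=\VV^e$. You should say this explicitly; otherwise the invocation of ``Brylinski's principal-filtration theorem'' looks like it is doing more than it actually does.
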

\begin{proof}
(i) \ Considering $\VV$ as $\es$-module, we obtain a decomposition
$\VV\vert_\es=\bigoplus_{j=1}^n \sfr_{l_j}$. 
Since $\dim\VV^e\ge\dim\VV^{\tilde h}\ge \dim\VV^\te$, the hypothesis implies that  each $\sfr_{l_j}$ has a zero-weight space (hence  each $l_j=2k_j$ is even) and $\VV^\te=\VV^{\tilde h}$. 
Consequently, if $\nu\in \cP(\VV)$ and $\nu\ne 0$, then $\nu(\tilde h)\ne 0$, which proves the partition
formula. Letting $\VV_\pm=\oplus_{\gamma\in\cP(\VV)_\pm}\VV^\gamma$, we see that $\VV^\te$
generates $\VV^\te\oplus\VV_+$ as $e$-module, whence $\cP(\VV)_+ \subset Q_+$.
Now, it is easily seen that if $\gamma\in Q_+$ is not proportional to a root, then there exists
$w\in W$ such that $w(\gamma)\not\in Q_+\cup (-Q_+)$.

(ii) \ By the above decomposition of $\VV\vert_\es$, the multiset of positive $\frac{1}{2}\tilde h$-eigenvalues in $\VV$, 
i.e., the multiset $\{\hot(\gamma) \mid \gamma\in \widetilde{\cP(\VV)_+}\}$ 
consists of $\{1,2,\dots,k_1,\,1,2,\dots,k_2,\,\dots ,\,1,2,\dots, k_n\}$. Therefore, most of the factors 
cancel out in the LHS  and we are left with the product
$\displaystyle \prod_{j=1}^n \frac{1-q^{k_i+1}}{1-q}$.

On the other hand, the theory of R.\,Brylinski~\cite{rkb89} shows that the generalised exponents of $\VV$ are determined by the $e$-filtration on $\VV^\te$ and are equal to the 
$\frac{1}{2}\tilde h$-eigenvalues in $V^{\z(e)}$, where $\z(e)$ is the centraliser of $e$ in $\g$.
Since  $\dim \VV^{\z(e)}=\dim\VV^\te$ for the simple $G$-modules having zero 
weight~\cite[Cor.\,2.7]{rkb89}, it follows that  $\VV^{\z(e)}=\VV^e$ in our situation, and
the eigenvalues in question are $k_1,k_2,\dots,k_n$. This yields the desired equality in part~(ii).
\end{proof}

\begin{rmk}   \label{rmk:formal-conseq}
A formal consequence of relation (ii) is that the partition $\bigl(m_1(\lb),\dots,m_n(\lb)\bigr)$ is dual 
to the partition formed by the numbers $\#\{\gamma\in\widetilde{\cP(\VV)_+} \mid \hot(\gamma)=i\}$.
For $\VV=\g$ and $\cP(\VV)_+=\Delta^+$, formula (ii) is sometimes 
called the Kostant-Macdonald identity, see \cite{ac89}; we also refer to~\cite{ac12} for a recent generalisation related to Schubert varieties.
\end{rmk}
\begin{rmk}   \label{rmk:graham}
If $\VV$ is a simple $G$-module with non-trivial zero-weight space, then 
\[
      \dim\VV^e\ge \dim\VV^{\z(e)}=\dim \VV^\te \le \dim \VV^{\tilde h}
\]
and $\dim\VV^e\ge \dim \VV^{\tilde h}$. Therefore the hypothesis of Theorem~\ref{thm:geom-tozhd}
implies that all these spaces have one and the same dimension. 
It is also known that, for {\sl any\/} simple $G$-module
$\VV$,  $\dim\VV^{\z(e)}$ equals the dimension of a largest weight space, which is achieved for 
either the unique dominant minuscule weight or zero, see \cite[Remark\,1.6]{gr92}.
\end{rmk}
Making use of the above coincidences and Theorem~\ref{thm:geom-tozhd}(i), one easily proves that the hypothesis of the theorem holds exactly for the following pairs $(\g,\lb)$ with simple $\g$:

\textbullet\quad $(\g,\theta)$ and $(\g,\theta_s)$, i.e., the adjoint and little adjoint representations of $\g$;

\textbullet\quad $(\GR{B}{r}, 2\vp_1)$,\ $(\GR{G}{2}, 2\vp_1)$,\ $(\GR{A}{1}, 2m\vp_1)$, $m\in\BN$. 
\\
The generalised exponents for the first two cases in the second line are:

$2,4,\dots,2r$ and $2,4,6$, respectively.

\end{document}